\newtheorem{theo}{Theorem}[section]
\newtheorem{proposition}[theo]{Proposition}
\newtheorem{lemma}[theo]{Lemma}
\newtheorem{rem}[theo]{Remark}
\newtheorem{definition}[theo]{Definition}
\newtheorem{example}[theo]{Example}
\begin{document}

\title{Remarks on the Homological Mirror Symmetry for Tori}

\author{Kazushi Kobayashi\footnote{Department of Mathematics, Graduate School of Science, Osaka University, Toyonaka, Osaka, 560-0043, Japan. E-mail : k-kobayashi@cr.math.sci.osaka-u.ac.jp. 2010 Mathematics Subject Classification : 14J33 (primary), 53D18, 14F05 (secondary). Keywords : torus, generalized complex geometry, homological mirror symmetry.}}

\date{}

\maketitle

\maketitle

\begin{abstract}
Let us consider an $n$-dimensional complex torus $T^{2n}_{J=T}:=\mathbb{C}^n/2\pi (\mathbb{Z}^n\oplus T\mathbb{Z}^n)$. Here, $T$ is a complex matrix of order $n$ whose imaginary part is positive definite. In particular, when we consider the case $n=1$, the complexified symplectic form of a mirror partner of $T^2_{J=T}$ is defined by using $-\frac{1}{T}$ or $T$. However, if we assume $n\geq 2$ and that $T$ is a singular matrix, we can not define a mirror partner of $T^{2n}_{J=T}$ as a natural generalization of the case $n=1$ to the higher dimensional case. In this paper, we propose a way to avoid this problem, and discuss the homological mirror symmetry.
\end{abstract}

\tableofcontents

\section{Introduction}
Mirror symmetry is a duality between a complex manifold $M$ and a symplectic manifold $\check{M}$. In 1994, as a categorical formulation of it, Kontsevich proposed a conjecture called the homological mirror symmetry \cite{Kon}. For a given mirror pair $(M,\check{M})$, this conjecture states that there exists an equivalence
\begin{equation*}
D^b(Coh(M))\cong Tr(Fuk(\check{M}))
\end{equation*}
as triangulated categories. Here, $D^b(Coh(M))$ is the bounded derived category of coherent sheaves on $M$, and $Tr(Fuk(\check{M}))$ is the derived category of the Fukaya category $Fuk(M)$ on $\check{M}$ \cite{Fukaya category} which is obtained by the Bondal-Kapranov-Kontsevich construction \cite{bondal}, \cite{Kon}. In particular, a pair of tori is a typical example of a mirror pair, so there are various studies of the homological mirror symmetry for tori (\cite{elliptic}, \cite{Fuk}, \cite{abouzaid} etc.).

When we discuss the mirror symmetry, we first consider a suitable pair consisting of a complex manifold and a symplectic manifold which is called a mirror pair. For a given complex or symplectic manifold, to define a mirror partner of it is a fundamental, and difficult problem. In this paper, in order to address this problem, we focus on a framework called the generalized complex geometry (generalized complex structures, generalized complex manifolds) which is introduced by Hitchin \cite{hitchin} (see also \cite{Gual}). Since generalized complex structures provide a way of treating complex structures and symplectic structures uniformly, we can naturally regard both complex manifolds and symplectic manifolds as generalized complex manifolds. In particular, we can also apply this framework to the study of the mirror symmetry. For example, in \cite{part1}, \cite{part2}, Oren Ben-Bassat introduced the notion of $\nabla$-semi-flat generalized complex structures. Roughly speaking, in \cite{part1}, \cite{part2}, he focuses on a real vector (torus) bundle $V\rightarrow M$ of rank $n$ with a flat connection $\nabla$ and its dual $(V^*\rightarrow M, \nabla^*)$ on the same $n$-dimensional smooth real base manifold $M$, and interprets the mirror symmetry as a duality between the generalized complex structure on the total space of $V$ and the generalized complex structure on the total space of $V^*$ by the T-duality (this construction depends on the choice of $\nabla$ on $V$). In this paper, we consider the case of tori, namely, take the tangent bundle $TT^n$ on an $n$-dimensional real torus $T^n$ with the trivial connection $\nabla$ as $(V\rightarrow M, \nabla)$ in the context of his works \cite{part1}, \cite{part2}. Moreover, for instance, the formulation of the mirror symmetry for tori via the generalized complex geometry is also discussed in \cite{Kaj} (see also \cite{Kim}). 

On the other hand, the SYZ construction \cite{SYZ} proposes a way of constructing mirror pairs geometrically. When we focus on tori, the generalized complex geometry relates this description in the following sense. In general, the group $O(n,n;\mathbb{Z})$ is defined by
\begin{equation*}
O(n,n;\mathbb{Z}):=\left\{ g\in M(2n;\mathbb{Z}) \ | \ g^t J g=J, \ J:=\left( \begin{array}{ccc} O & I_n \\ I_n & O \end{array} \right) \right\},
\end{equation*}
where $I_n$ is the identity matrix of order $n$. Let us consider a $2n$-dimensional real torus $T^{2n}$ equipped with a generalized complex structure $\mathcal{I} : \Gamma (TT^{2n}\oplus T^*T^{2n})\rightarrow \Gamma (TT^{2n}\oplus T^*T^{2n})$, and set
\begin{equation*}
g_{24}:=\left( \begin{array}{cccc} I_n & O & O & O \\ O & O & O & I_n \\ O & O & I_n & O \\ O & I_n & O & O \end{array} \right) \in O(2n,2n;\mathbb{Z}), 
\end{equation*}
where $\Gamma (TT^{2n}\oplus T^*T^{2n})$ denotes the space of smooth sections of $TT^{2n}\oplus T^*T^{2n}$. Then, 
\begin{equation*}
g_{24}^{-1}\mathcal{I}g_{24}
\end{equation*}
again defines a generalized complex structure, and we can treat $(T^{2n}, g_{24}^{-1}\mathcal{I}g_{24})$ as a mirror partner of $(T^{2n}, \mathcal{I})$. Based on the SYZ construction, we regard $(T^{2n}, \mathcal{I})$ and $(T^{2n}, g_{24}^{-1}\mathcal{I}g_{24})$ as the trivial special Lagrangian torus fibrations on the same base space $T^n$, namely, each fiber of these fibrations is also $T^n$. In this situation, the above transformation means that we fix the base space $T^n$ corresponding to the $(1,1)$-block $I_n$ and the $(3,3)$-block $I_n$ of $g_{24}$ and take the T-duality along the remaining $T^n$ regarded as a fiber on the base space $T^n$. In this sense, the group $O(n,n;\mathbb{Z})$ is also called the T-duality group. 

Now, we explain the purpose of this paper. Let $T$ be a complex matrix of order $n$ such that $\mathrm{Im}T$ is positive definite. By using this complex matrix $T$, we can define the $n$-dimensional complex torus
\begin{equation*}
T^{2n}_{J=T}:=\mathbb{C}^n/2\pi (\mathbb{Z}^n\oplus T\mathbb{Z}^n).
\end{equation*}
We would like to define a mirror partner $\check{T}^{2n}_{J=T}$ of this complex torus $T^{2n}_{J=T}$ following the framework of the generalized complex geometry. Let us first consider the case $T=\mathbf{i}T_I$, where $\mathbf{i}=\sqrt{-1}$ and $T_I\in M(n;\mathbb{R})$ is positive definite. Clearly, this $T$ has the inverse $T^{-1}=-\mathbf{i}T_I^{-1}$. Then, according to the discussions in \cite{part1}, \cite{part2} (see also section 3 in \cite{Kaj}), we can define a mirror partner $\check{T}^{2n}_{J=T}$ by employing the complexified symplectic form
\begin{equation}
d\check{x}^t (-T^{-1})^t d\check{y}=d\check{x}^t \mathbf{i}(T_I^{-1})^t d\check{y}, \label{csf}
\end{equation}
where $\check{x}:=(x^1,\cdots, x^n)^t$, $\check{y}:=(y^1,\cdots, y^n)^t$ denote the local coordinates of $\check{T}^{2n}_{J=T}$, and $d\check{x}:=(dx^1,\cdots, dx^n)^t$, $d\check{y}:=(dy^1,\cdots, dy^n)^t$. In particular, the T-duality in this correspondence is described by the matrix $g_{24}\in O(2n,2n;\mathbb{Z})$. Therefore, for general $T$, it is expected that we can define a mirror partner $\check{T}^{2n}_{J=T}$ equipped with the complexified symplectic structure of the form $d\check{x}^t (-T^{-1})^t d\check{y}$ by considering the corresponding B-field transform of (\ref{csf}). Actually, for example, such a B-field transform is already discussed in \cite{Kaj}, and there, it is also proved that the above expectation holds true in the case of elliptic curves (see the relation (41) in \cite{Kaj}). However, if we assume $n\geq 2$, the complex matrix $T$ does not necessarily become a non-singular matrix. Hence, although we can define the complexified symplectic form of $\check{T}^{2n}_{J=T}$ by $d\check{x}^t (-T^{-1})^t d\check{y}$ as a natural generalization of the case of elliptic curves, i.e., $-\frac{1}{T}dx^1\wedge dy^1$ to the higher dimensional case if $\mathrm{det}T\not=0$, we can not define a mirror partner $\check{T}^{2n}_{J=T}$ of $T^{2n}_{J=T}$ by using the similar way if $\mathrm{det}T=0$. On the other hand, when we consider the T-duality, we can also choose
\begin{equation*}
g_{13}:=\left( \begin{array}{cccc} O & O & I_n & O \\ O & I_n & O & O \\ I_n & O & O & O \\ O & O & O & I_n \end{array} \right) \in O(2n,2n;\mathbb{Z})
\end{equation*}
instead of $g_{24}\in O(2n,2n;\mathbb{Z})$. Then, even if $\mathrm{det}T=0$, we can define the complexified symplectic form of $\check{T}^{2n}_{J=T}$ by 
\begin{equation*}
d\check{x}^t T d\check{y}.
\end{equation*}
However, when we discuss the homological mirror symmetry on this mirror pair $(T^{2n}_{J=T}, \check{T}^{2n}_{J=T})$, we need to use the inverse of $T$ in the definition of complex geometric objects of the category which should be defined on $T^{2n}_{J=T}$. Concerning these facts, in this paper, we propose a way to avoid this problem, and discuss the homological mirror symmetry.

This paper is organized as follows. In section 2, we recall the definition of generalized complex manifolds and give several examples of them. Moreover, we also mention the definition of mirror pairs of tori in the framework of the generalized complex geometry. In sections 3, 4, 5, when we treat $T$, we consider the case $\mathrm{det}T=0$ only. In section 3, we propose a way to construct a mirror partner $\check{T}^{2n}_{J=T}$ of $T^{2n}_{J=T}$. In section 4, we consider the homological mirror symmetry setting for the mirror pair $(T^{2n}_{J=T}, \check{T}^{2n}_{J=T})$ which is obtained in section 3. We also provide an analogue of \cite[Theorem 5.1]{kazushi4} on $(T^{2n}_{J=T}, \check{T}^{2n}_{J=T})$, and this result is given in Theorem \ref{bijection2}. In section 5, we investigate holomorphic vector bundles on $T^{2n}_{J=T}$ which are employed in the homological mirror symmetry setting for $(T^{2n}_{J=T}, \check{T}^{2n}_{J=T})$. In particular, since those holomorphic vector bundles are simple projectively flat bundles, we also provide an interpretation of them in the language of factors of automorphy. This result is given in Theorem \ref{maintheorem}, and it is an analogue of \cite[Theorem 3.6]{kazushi2}.

\section{Generalized complex geometry and mirror symmetry for tori}
In this section, we recall the definition of generalized (almost) complex structures briefly, and interpret the mirror symmetry for tori in this framework (\cite{Gual}, \cite{part1}, \cite{part2}, \cite{Kaj}, \cite{Kim} etc.). Throughout this section, for a manifold $M$ and a vector bundle $E$, $TM$, $T^*M$ and $\Gamma (E)$ denote the tangent bundle of $M$, the cotangent bundle of $M$ and the space of smooth sections of $E$, respectively. 

Now, let $M$ be a $2n$-dimensional real smooth manifold. We define a quadratic form 
\begin{equation}
\langle \ \cdot \ , \ \cdot \ \rangle \ : \ \Gamma (TM\oplus T^*M)\otimes \Gamma (TM\oplus T^*M) \longrightarrow C^{\infty }(M) \label{quadratic}
\end{equation}
by $\langle X+\alpha ,Y+\beta \rangle:=\alpha (Y)+\beta (X)$, where $X$, $Y\in \Gamma (TM)$ and $\alpha $, $\beta \in \Gamma (T^*M)$. Then, we can give the definition of generalized almost complex structures (manifolds) as follows.
\begin{definition}[Generalized almost complex structures (manifolds)]
A generalized almost complex structure $\mathcal{I}$ on a smooth manifold $M$ is a linear map $\mathcal{I} : \Gamma (TM\oplus T^*M)\rightarrow \Gamma (TM\oplus T^*M)$ which satisfies $\mathcal{I}^2=-1$ and preserves the quadratic form $(\ref{quadratic})$, i.e., $\langle \mathcal{I}(X+\alpha ), \mathcal{I}(Y+\beta ) \rangle=\langle X+\alpha , Y+\beta \rangle$ $(X$, $Y\in \Gamma (TM)$, $\alpha $, $\beta \in \Gamma (T^*M))$. In particular, a pair $(M,\mathcal{I})$ of a smooth manifold $M$ and a generalized almost complex structure $\mathcal{I}$ is called a generalized almost complex manifold.
\end{definition}
We give several examples of generalized almost complex manifolds. 
\begin{example} \label{ex1}
Let $(M,J)$ be an almost complex manifold, i.e., $J : \Gamma (TM)\rightarrow \Gamma (TM)$ is a linear map such that $J^2=-1$, and we consider the adjoint map $J^* : \Gamma (T^*M)\rightarrow \Gamma (T^*M)$, $(J^*(\alpha ))(X):=\alpha (J(X))$ $(X\in \Gamma (TM), \alpha \in \Gamma (T^*M))$ of $J : \Gamma (TM)\rightarrow \Gamma (TM)$. Then, the map $\mathcal{I}_J := J\oplus (-J^*) : \Gamma (TM\oplus T^*M)\rightarrow \Gamma (TM\oplus T^*M)$ satisfies $\mathcal{I}_J^2=-1$ by $J^2=-1$, and preserves the quadratic form $(\ref{quadratic})$. Hence, almost complex manifolds are examples of generalized almost complex manifolds.
\end{example}
\begin{example} \label{ex2}
Let $\omega $ be a nondegenerate 2-form on $M$. For the linear map $\omega  : \Gamma (TM)\rightarrow \Gamma (T^*M)$, $X\mapsto \omega (X, \cdot )$ $(X\in \Gamma (TM))$ associated to the nondegenerate 2-form $\omega $, we can express the representation matrix $\omega _{rep}$ of $\omega  : \Gamma (TM)\rightarrow \Gamma (T^*M)$ by
\begin{equation*}
\omega _{rep} = \left( \begin{array}{ccc} O & -\omega \\ \omega ^t & O \end{array} \right).
\end{equation*}
Here, note that $\omega _{rep}$ is an alternating matrix. By using this matrix $\omega _{rep}$, we can express the representation matrix of the adjoint map $\omega ^* : \Gamma (T^*M)\rightarrow \Gamma (TM)$, $(\omega (X))(\omega ^*(\alpha ))=\alpha (X)$ $(X\in \Gamma (TM), \alpha \in \Gamma (T^*M))$ of $\omega : \Gamma (TM)\rightarrow \Gamma (T^*M)$ by $-\omega _{rep}^{-1}$. Then, the map
\begin{equation*}
\mathcal{I}_{\omega } := \left( \begin{array}{ccc} O & -\omega _{rep}^{-1} \\ \omega _{rep} & O \end{array} \right) : \Gamma (TM\oplus T^*M)\rightarrow \Gamma (TM\oplus T^*M)
\end{equation*}
satisfies $\mathcal{I}_{\omega }^2 =-1$ by alternativity of $\omega _{rep}$, and preserves the quadratic form $(\ref{quadratic})$. Hence, $(M,\omega )$ forms a generalized almost complex manifold.
\end{example}
\begin{example} \label{ex3}
Let $(M,\mathcal{I})$ be a generalized almost complex manifold. In local matrix expression, we consider the transformation on $\Gamma (TM\oplus T^*M)$ which is defined by 
\begin{equation*}
\left( \begin{array}{ccc} I_{2n} & O \\ B & I_{2n} \end{array} \right).
\end{equation*}
Here, $B$ is a matrix of order $2n$ and $I_{2n}$ denotes the identity matrix of order $2n$. This means that the matrix $B$ defines a 2-form $B\in \Gamma (\wedge ^2 T^*M)$. Here, we define the linear map $\mathcal{I}(B) : \Gamma (TM\oplus T^*M)\rightarrow \Gamma (TM\oplus T^*M)$ by
\begin{equation*}
\mathcal{I}(B) := \left( \begin{array}{ccc} I_{2n} & O \\ B & I_{2n} \end{array} \right) \mathcal{I} \left( \begin{array}{ccc} I_{2n} & O \\ B & I_{2n} \end{array} \right)^{-1} = \left( \begin{array}{ccc} I_{2n} & O \\ B & I_{2n} \end{array} \right) \mathcal{I} \left( \begin{array}{ccc} I_{2n} & O \\ -B & I_{2n} \end{array} \right).
\end{equation*}
Then, $(M, \mathcal{I}(B))$ also forms a generalized almost complex manifold. In this case, $\mathcal{I}(B)$ is called a B-field transform of $\mathcal{I}$.
\end{example}
Strictly speaking, in order to define generalized complex structures (manifolds), we need to discuss integrability of generalized almost complex structures (manifolds). Although we omit the details of discussions of integrability conditions here, we remark some facts below.
\begin{rem}
In Example \ref{ex1}, the condition that $J$ is integrable is equivalent to that $\mathcal{I}_J$ is integrable. Thus, complex manifolds are examples of generalized complex manifolds.
\end{rem}
\begin{rem}
In example \ref{ex2}, $\omega $ is a closed 2-form if and only if $\mathcal{I}_{\omega }$ is integrable. Thus, symplectic manifolds are examples of generalized complex manifolds.
\end{rem}
\begin{rem}
In example \ref{ex3}, the 2-form $B$ is a closed 2-form if and only if $\mathcal{I}(B)$ is integrable. 
\end{rem}
Hereafter, for a given complex torus $\mathbb{C}^n/2\pi (\mathbb{Z}^n\oplus T\mathbb{Z}^n)$, sometimes we denote $T^{2n}_{J=T}$ instead of $\mathbb{C}^n/2\pi (\mathbb{Z}^n\oplus T\mathbb{Z}^n)$ :
\begin{equation*}
T^{2n}_{J=T}:=\mathbb{C}^n/2\pi (\mathbb{Z}^n\oplus T\mathbb{Z}^n).
\end{equation*}
Similarly, for a given complexified symplectic torus $(T^{2n},\tilde{\omega } =dp^t \tau dq)$, where $p:=(p^1,\cdots, p^n)^t$, $q:=(q^1,\cdots,q^n)^t$ are the local coordinates of $T^{2n}$ and $dp:=(dp^1,\cdots,dp^n)^t$, $dq:=(dq^1,\cdots,dq^n)^t$, sometimes we denote $T^{2n}_{\tilde{\omega } =\tau }$ instead of $(T^{2n},\tilde{\omega } =dp^t \tau dq)$ :
\begin{equation*}
T^{2n}_{\tilde{\omega}=\tau}:=(T^{2n},\tilde{\omega } =dp^t \tau dq).
\end{equation*}

We give an interpretation of the mirror symmetry for tori in terms of the generalized complex geometry (cf. \cite{part1}, \cite{part2}, \cite{Kaj}). In general, there are two ways how to define a mirror partner of a $2n$-dimensional real torus $T^{2n}$ equipped with a generalized complex structure $\mathcal{I}$ (Definition \ref{mirror1} and Definition \ref{mirror2} correspond to the T-duality by $g_{24}\in O(2n,2n;\mathbb{Z})$ and the T-duality by $g_{13}\in O(2n,2n;\mathbb{Z})$ in the introduction, respectively).
\begin{definition} \label{mirror1}
For a $2n$-dimensional real torus $(T^{2n}, \mathcal{I})$, where $\mathcal{I}$ is a generalized complex structure on $T^{2n}$, its mirror transform $(T^{2n}, \check{\mathcal{I}})$ is defined by
\begin{equation*}
\check{\mathcal{I}}:=\left( \begin{array}{ccccc} I_n & O & O & O \\ O & O & O & I_n \\ O & O & I_n & O \\ O & I_n & O & O \end{array} \right) \mathcal{I} \left( \begin{array}{ccccc} I_n & O & O & O \\ O & O & O & I_n \\ O & O & I_n & O \\ O & I_n & O & O \end{array} \right).
\end{equation*}
\end{definition}
\begin{definition} \label{mirror2}
For a $2n$-dimensional real torus $(T^{2n}, \mathcal{I})$, where $\mathcal{I}$ is a generalized complex structure on $T^{2n}$, its mirror transform $(T^{2n}, \check{\mathcal{I}})$ is defined by
\begin{equation*}
\check{\mathcal{I}}:=\left( \begin{array}{ccccc} O & O & I_n & O \\ O & I_n & O & O \\ I_n & O & O & O \\ O & O & O & I_n \end{array} \right) \mathcal{I} \left( \begin{array}{ccccc} O & O & I_n & O \\ O & I_n & O & O \\ I_n & O & O & O \\ O & O & O & I_n \end{array} \right).
\end{equation*}
\end{definition}
We first observe a mirror partner of a given complex torus $T^{2n}_{J=T}$ following Definition \ref{mirror1}. When we denote $T=T_R+\mathbf{i}T_I$ with $T_R:=$Re$T$, $T_I:=$Im$T$ and $\mathbf{i}=\sqrt{-1}$, the corresponding generalized complex structure $\mathcal{I}_J$ is expressed as
\begin{equation} \label{I_J}
\left( \begin{array}{cccc} -T_R T_I^{-1} & -T_I -T_R T_I^{-1} T_R & O & O \\ T_I^{-1} & T_I^{-1} T_R & O & O \\ O & O & (T_I^{-1})^t T_R^t & -(T_I^{-1})^t \\ O & O & T_I^t + T_R^t(T_I^{-1})^t T_R^t & -T_R^t(T_I^{-1})^t \end{array} \right).
\end{equation}
On the other hand, for a complexified symplectic torus $T^{2n}_{\tilde{\omega } =B+\mathbf{i}\omega }$ and two matrices
\begin{equation*}
\omega _{rep} := \left( \begin{array}{ccc} O & -\omega \\ \omega ^t & O \end{array} \right), \ \tilde{B} := \left( \begin{array}{ccc} O & -B \\ B^t & O \end{array} \right),
\end{equation*}
the corresponding generalized complex structure 
\begin{equation*}
\mathcal{I}_{\omega }(B) = \left( \begin{array}{ccc} I_{2n} & O \\ \tilde{B} & I_{2n} \end{array} \right) \left( \begin{array}{ccc} O & -\omega _{rep}^{-1} \\ \omega _{rep} & O \end{array} \right) \left( \begin{array}{ccc} I_{2n} & O \\ -\tilde{B} & I_{2n} \end{array} \right)
\end{equation*}
is expressed as
\begin{equation*} 
\left( \begin{array}{cccc} (\omega ^{-1})^t B^t & O & O & -(\omega ^{-1})^t \\ O & \omega ^{-1} B & \omega ^{-1} & O \\ O & -\omega -B \omega ^{-1} B & -B \omega ^{-1} & O \\ \omega ^t + B^t (\omega ^{-1})^t B^t & O & O & -B^t (\omega ^{-1} )^t \end{array} \right),
\end{equation*}
so by Definition \ref{mirror1}, its mirror dual $\check{\mathcal{I}_{\omega }}(B)$ is 
\begin{equation} \label{I_omega}
\left( \begin{array}{cccc} (\omega ^{-1})^t B^t & -(\omega ^{-1})^t & O & O \\ \omega ^t + B^t (\omega ^{-1})^t B^t & -B^t (\omega ^{-1})^t & O & O \\ O & O & -B\omega ^{-1} & -\omega -B \omega ^{-1} B \\ O & O & \omega ^{-1} & \omega ^{-1} B \end{array} \right).
\end{equation}
Hence, by comparing the expression (\ref{I_J}) of $\mathcal{I_J}$ and the expression (\ref{I_omega}) of $\check{\mathcal{I}_{\omega }}(B)$, we obtain the following equalities :
\begin{align}
&-T_R T_I^{-1} = (\omega ^{-1})^t B^t, \label{eq1} \\
&-T_I -T_R T_I^{-1}T_R = -(\omega ^{-1})^t, \label{eq2} \\
&T_I^{-1} = \omega ^t + B^t (\omega ^{-1})^t B^t, \label{eq3} \\
&T_I^{-1} T_R = -B^t (\omega ^{-1})^t. \label{eq4} 
\end{align}
We obtain the two relations 
\begin{align}
&-T_R^t \omega -T_I^t B = O, \label{eq5} \\
&T_R^t B = -T_R^t(T_I^{-1})^t T_R^t \omega \label{eq6}
\end{align}
by deforming the equality (\ref{eq1}). In particular, by using the equality (\ref{eq6}), one has
\begin{align*}
T_I^t\omega -T_R^t B &=T_I^t\omega +T_R^t (T_I^{-1})^t T_R^t \omega \\
&=(T_I^t + T_R^t (T_I^{-1})^t T_R^t )\omega .
\end{align*}
On the other hand, the equality (\ref{eq2}) is equivalent to 
\begin{equation*}
T_I^t + T_R^t (T_I^{-1})^t T_R^t =\omega ^{-1},
\end{equation*}
so we obtain the following :
\begin{equation}
T_I^t\omega -T_R^t B = I_n. \label{eq7}
\end{equation}
Thus, by using the equalities (\ref{eq5}), (\ref{eq7}), we see
\begin{align}
-T^t(B+\mathbf{i}\omega )&=-(T_R+\mathbf{i}T_I)^t(B+\mathbf{i}\omega ) \notag \\ 
&=T_I^t\omega -T_R^t B + \mathbf{i}(-T_R^t\omega -T_I^t B) \notag \\ 
&=I_n. \label{right}
\end{align}
Similarly, by using the equalities (\ref{eq2}), (\ref{eq4}), the relation 
\begin{equation}
(B+\mathbf{i}\omega )(-T^t)=I_n \label{left}
\end{equation}
also holds. Clearly, the two relations (\ref{right}), (\ref{left}) state $B+\mathbf{i}\omega =-(T^{-1})^t$, and this fact indicates that the matrix $T$ is invertible. Furthermore, when we set 
\begin{equation*}
B=\mathrm{Re}(-(T^{-1})^t),\ \omega =\mathrm{Im}(-(T^{-1})^t),
\end{equation*}
the equality (\ref{eq3}) is also satisfied automatically. However, if we assume $n\geq 2$, the matrix $T$ does not necessarily become a non-singular matrix. For example,
\begin{equation*}
T:=\left( \begin{array}{ccc} \mathbf{i} & 1 \\ -1 & \mathbf{i} \end{array} \right) = \left( \begin{array}{ccc} 0 & 1 \\ -1 & 0 \end{array} \right) + \mathbf{i}\cdot I_2.
\end{equation*}
Therefore, if the matrix $T$ is a singular matrix, we can not define a mirror partner of the complex torus $T^{2n}_{J=T}$ following Definition \ref{mirror1}. 

We also mention the construction of mirror partners of complex tori following Definition \ref{mirror2}. In this case, for a complex torus $T^{2n}_{J=T}$, we can give the complexified symplectic structure of the mirror partner of the complex torus $T^{2n}_{J=T}$ by using the matrix $T$, namely, $\check{T}^{2n}_{J=T}:=T^{2n}_{\tilde{\omega }=T}$. Therefore, we can define a mirror partner of the complex torus $T^{2n}_{J=T}$ even if $T$ is a singular matrix. However, when we discuss the homological mirror symmetry on this mirror pair $(\check{T}^{2n}_{J=T}, T^{2n}_{J=T})$, we need to use the inverse of $T$ in the definition of objects of the category which should be defined on $T^{2n}_{J=T}$ (we also mention the details of this problem in Remark \ref{Tinverse}).

Thus, in any case, we need to use the inverse of $T$. We propose a way to avoid this problem in sections 3, 4. 

\section{On mirror partners of complex tori}
As discussed in section 2, for a complex torus $T^{2n}_{J=T}$, we should consider the case such that the matrix $T$ is a singular matrix. Therefore, we choose an arbitrary singular matrix $T\in M(n;\mathbb{C})$ whose imaginary part is positive definite, and fix it. Note that we use the same matrix $T$ in sections 4, 5. The purpose of this section is to construct a mirror partner of the complex torus $T^{2n}_{J=T}$ which is defined by the fixed singular matrix $T$. 

First, as the preparation of the main discussions, we prove the following lemma.
\begin{lemma} \label{delta}
For the singular matrix $T\in M(n;\mathbb{C})$, there exists a matrix $\delta \in M(n;\mathbb{Z})$ such that $T-\delta $ is a non-singular matrix.
\end{lemma}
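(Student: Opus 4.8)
The plan is to exploit the fact that the imaginary part of $T$ is positive definite, so in particular $\mathrm{Im}\,T$ is invertible. Write $T = T_R + \mathbf{i}T_I$ with $T_R \in M(n;\mathbb{R})$ and $T_I \in M(n;\mathbb{R})$ positive definite. The singularity of $T$ is a property of $T_R + \mathbf{i}T_I$ as a complex matrix, but $T - \delta = (T_R - \delta) + \mathbf{i}T_I$ for $\delta \in M(n;\mathbb{Z})$, so we only ever perturb the real part. Thus it suffices to show that there is an integer matrix $\delta$ with $(T_R - \delta) + \mathbf{i}T_I$ non-singular.

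First I would reduce to a statement about a polynomial not vanishing identically. For a fixed choice of variable matrix, consider $f(\delta) := \det\big((T_R - \delta) + \mathbf{i}T_I\big)$ as a polynomial function in the $n^2$ entries of $\delta$ (equivalently, a polynomial in the entries of the real matrix $S := T_R - \delta$). The key observation is that this polynomial is not the zero polynomial: indeed, evaluating at $S = O$ gives $\det(\mathbf{i}T_I) = \mathbf{i}^n \det T_I \neq 0$ because $T_I$ is positive definite, hence invertible. So $f$ is a nonzero polynomial in the real variables (the entries of $\delta$), with complex coefficients; splitting into real and imaginary parts, at least one of $\mathrm{Re}\,f$, $\mathrm{Im}\,f$ is a nonzero real polynomial in the entries of $\delta$.

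Next I would invoke the standard fact that a nonzero polynomial in finitely many variables over $\mathbb{R}$ cannot vanish on all of $\mathbb{Z}^{n^2}$ — more is true, its zero set has measure zero, but we only need that $\mathbb{Z}^{n^2}$ is Zariski-dense, which follows by induction on the number of variables using that a nonzero one-variable polynomial has finitely many roots. Applying this to whichever of $\mathrm{Re}\,f$, $\mathrm{Im}\,f$ is nonzero, there exists $\delta \in M(n;\mathbb{Z})$ with that part nonzero, hence $f(\delta) = \det(T - \delta) \neq 0$, i.e. $T - \delta$ is non-singular. This completes the proof.

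There is essentially no serious obstacle here; the only point requiring a little care is the bookkeeping that the determinant really is a polynomial with the claimed nonvanishing at $\delta$ chosen so that $T_R - \delta = O$ when $T_R$ happens to be an integer matrix — but that case is trivial since then $\delta = T_R$ already works as $T - \delta = \mathbf{i}T_I$ is non-singular. In general one just needs that $\det(T - \delta)$, viewed as a function of $\delta \in \mathbb{Z}^{n^2}$, is the restriction of a nonzero polynomial, which is immediate from expanding the determinant and from $\det(\mathbf{i}T_I) \neq 0$.
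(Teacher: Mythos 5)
Your proof is correct, but it takes a genuinely different route from the paper. The paper argues constructively: it puts the rank-$m$ matrix $T$ into a normal form in which the last $n-m$ rows are proportional to rows of a non-singular $m\times m$ block, writes down an explicit $0$--$1$ matrix $\delta$ with exactly $n-m$ nonzero entries, and checks via multilinearity of the determinant that $\det(T-\delta)$ collapses to the determinant of that non-singular block; positive definiteness of $\mathrm{Im}\,T$ enters only through $\mathrm{rank}\,T\neq 0$. You instead use positive definiteness in an essential way, namely $\det(\mathrm{Im}\,T)\neq 0$, to see that $\delta\mapsto\det(T-\delta)$ is a not-identically-zero polynomial function on $\mathbb{R}^{n^2}$ (it is nonzero at the real, generally non-integral, point $\delta=\mathrm{Re}\,T$), and then conclude from the fact that a nonzero polynomial cannot vanish on all of $\mathbb{Z}^{n^2}$ that some integer $\delta$ avoids its zero set; your splitting into $\mathrm{Re}\,f$ and $\mathrm{Im}\,f$ handles the complex coefficients correctly. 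Your argument is shorter, avoids the normal-form bookkeeping, and does not even need $T$ to be singular, at the cost of being non-constructive, whereas the paper's argument produces a concrete $\delta$, which is convenient since $\delta$ subsequently enters the definition of the mirror partner and of the biholomorphism $\varphi$. A further simplification available to you: restricting to $\delta=N I_n$ with $N\in\mathbb{Z}$, the function $N\mapsto\det(T-NI_n)$ is, up to sign, the characteristic polynomial of $T$, a nonzero polynomial of degree $n$, so all but at most $n$ integers $N$ already work and no multivariable density argument is needed.
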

\begin{proof}
Now, although $T$ is a singular matrix, since $\mathrm{Im}T$ is positive definite, $\mathrm{rank}T\not=0$. Therefore, we may assume 
\begin{equation*}
\mathrm{rank}T=m, \ m=1,\cdots, n-1,
\end{equation*}
and it is enough to consider the case
\begin{equation*}
T=\left( \begin{array}{@{\,}ccccccc@{\,}} 
t_{11} & \ldots & t_{1 i_1} & \ldots & t_{1 i_m} & \ldots & t_{1n} \\
\vdots & \ddots & \vdots & \ddots & \vdots & \ddots & \vdots \\
t_{m1} & \ldots & t_{m i_1} & \ldots & t_{m i_m} & \ldots & t_{mn} \\
c_{m+1}t_{j_{m+1}1} & \ldots & c_{m+1}t_{j_{m+1}i_1} & \ldots & c_{m+1}t_{j_{m+1}i_m} & \ldots & c_{m+1}t_{j_{m+1}n} \\
\vdots & \ddots & \vdots & \ddots & \vdots & \ddots & \vdots \\
c_{n}t_{j_{n}1} & \ldots & c_{n}t_{j_{n}i_1} & \ldots & c_{n}t_{j_{n}i_m} & \ldots & c_{n}t_{j_{n}n} 
\end{array} \right),
\end{equation*}
where 
\begin{align*}
&1\leq i_1<\cdots<i_m\leq n, \\
&j_k=1,\cdots, m \ (k=m+1,\cdots, n), \\
&c_k \in \mathbb{C}^{\times} \ (k=m+1,\cdots, n), \\
&\mathrm{det}\left( \begin{array}{ccccc} 
t_{1i_1} & \ldots & t_{1i_m} \\
\vdots & \ddots & \vdots \\
t_{mi_1} & \ldots & t_{mi_m} 
\end{array} \right) \not= 0.
\end{align*}
For this $T$, for example, we may define $\delta=(\delta_{ij})\in M(n;\mathbb{Z})$ by
\begin{align*}
&\delta_{(m+1)(i_m+1)}=\cdots=\delta_{(m+n-i_m)(n)}=1, \\
&\delta_{(m+n-i_m+1)(1)}=\cdots=\delta_{(m+n-i_m+i_1-1)(i_1-1)}=1, \\
&\delta_{(m+n-i_m+i_1)(i_1+1)}=\cdots=\delta_{(m+n-i_m+i_2-2)(i_2-1)}=1, \\
&\hspace{40mm} \vdots \\
&\delta_{(n-i_m+i_{m-1}+2)(i_{m-1}+1)}=\cdots=\delta_{(n)(i_m-1)}=1,
\end{align*}
and all other components are $0$. Namely, in the definition of this $\delta \in M(n;\mathbb{Z})$,
\begin{equation*}
(n-i_m)+(i_1-1)+(i_2-i_1-1)+\cdots+(i_m-i_{m-1}-1)=n-m
\end{equation*}
components are $1$. Then, by the multilinearity of determinants, we see
\begin{equation*}
\mathrm{det}(T-\delta)=\pm \mathrm{det}\left( \begin{array}{ccccc} 
t_{1i_1} & \ldots & t_{1i_m} \\
\vdots & \ddots & \vdots \\
t_{mi_1} & \ldots & t_{mi_m} 
\end{array} \right) \not= 0.
\end{equation*}
Although this completes the proof itself, finally, we give an example. Let us consider the case such that $n=5$, $m=2$, $i_1=1$, $i_2=3$, $j_3=1$, $j_4=1$, $j_5=2$, i.e.,
\begin{equation*}
T=\left( \begin{array}{ccccc} 
t_{11} & t_{12} & t_{13} & t_{14} & t_{15} \\
t_{21} & t_{22} & t_{23} & t_{24} & t_{25} \\
c_1t_{11} & c_1t_{12} & c_1t_{13} & c_1t_{14} & c_1t_{15} \\
c_2t_{11} & c_2t_{12} & c_2t_{13} & c_2t_{14} & c_2t_{15} \\
c_3t_{21} & c_3t_{22} & c_3t_{23} & c_3t_{24} & c_3t_{25} 
\end{array} \right),
\end{equation*}
where $c_1$, $c_2$, $c_3 \in \mathbb{C}^{\times}$ and 
\begin{equation*}
\mathrm{det}\left( \begin{array}{ccc} t_{11} & t_{13} \\ t_{21} & t_{23} \end{array} \right)\not=0.
\end{equation*}
Then, according to the above definition, we can give $\delta \in M(5;\mathbb{Z})$ by
\begin{equation*}
\delta_{34}=\delta_{45}=1, \ \delta_{52}=1,
\end{equation*}
and all other components are $0$, i.e.,
\begin{equation*}
\delta=\left( \begin{array}{ccccc}
0 & 0 & 0 & 0 & 0 \\
0 & 0 & 0 & 0 & 0 \\
0 & 0 & 0 & 1 & 0 \\
0 & 0 & 0 & 0 & 1 \\
0 & 1 & 0 & 0 & 0 
\end{array} \right).
\end{equation*}
In this situation, by a direct calculation, we see
\begin{align*}
\mathrm{det}(T-\delta)&=\mathrm{det}T+\mathrm{det}\left( \begin{array}{ccccc} 
t_{11} & t_{12} & t_{13} & t_{14} & t_{15} \\
t_{21} & t_{22} & t_{23} & t_{24} & t_{25} \\
c_1t_{11} & c_1t_{12} & c_1t_{13} & c_1t_{14} & c_1t_{15} \\
c_2t_{11} & c_2t_{12} & c_2t_{13} & c_2t_{14} & c_2t_{15} \\
0 & -1 & 0 & 0 & 0
\end{array} \right) \\
&\hspace{5mm} +\mathrm{det}\left( \begin{array}{ccccc} 
t_{11} & t_{12} & t_{13} & t_{14} & t_{15} \\
t_{21} & t_{22} & t_{23} & t_{24} & t_{25} \\
c_1t_{11} & c_1t_{12} & c_1t_{13} & c_1t_{14} & c_1t_{15} \\
0 & 0 & 0 & 0 & -1 \\
c_3t_{21} & c_3t_{22} & c_3t_{23} & c_3t_{24} & c_3t_{25} 
\end{array} \right) \\
&\hspace{5mm} +\mathrm{det}\left( \begin{array}{ccccc} 
t_{11} & t_{12} & t_{13} & t_{14} & t_{15} \\
t_{21} & t_{22} & t_{23} & t_{24} & t_{25} \\
c_1t_{11} & c_1t_{12} & c_1t_{13} & c_1t_{14} & c_1t_{15} \\
0 & 0 & 0 & 0 & -1 \\
0 & -1 & 0 & 0 & 0
\end{array} \right) \\
&\hspace{5mm} +\mathrm{det}\left( \begin{array}{ccccc} 
t_{11} & t_{12} & t_{13} & t_{14} & t_{15} \\
t_{21} & t_{22} & t_{23} & t_{24} & t_{25} \\
0 & 0 & 0 & -1 & 0 \\
c_2t_{11} & c_2t_{12} & c_2t_{13} & c_2t_{14} & c_2t_{15} \\
c_3t_{21} & c_3t_{22} & c_3t_{23} & c_3t_{24} & c_3t_{25} 
\end{array} \right) 
\end{align*}
\begin{align*}
&\hspace{14mm} +\mathrm{det}\left( \begin{array}{ccccc} 
t_{11} & t_{12} & t_{13} & t_{14} & t_{15} \\
t_{21} & t_{22} & t_{23} & t_{24} & t_{25} \\
0 & 0 & 0 & -1 & 0 \\
c_2t_{11} & c_2t_{12} & c_2t_{13} & c_2t_{14} & c_2t_{15} \\
0 & -1 & 0 & 0 & 0
\end{array} \right) \\
&\hspace{14mm} +\mathrm{det}\left( \begin{array}{ccccc} 
t_{11} & t_{12} & t_{13} & t_{14} & t_{15} \\
t_{21} & t_{22} & t_{23} & t_{24} & t_{25} \\
0 & 0 & 0 & -1 & 0 \\
0 & 0 & 0 & 0 & -1 \\
c_3t_{21} & c_3t_{22} & c_3t_{23} & c_3t_{24} & c_3t_{25} 
\end{array} \right) \\
&\hspace{14mm} +\mathrm{det}\left( \begin{array}{ccccc} 
t_{11} & t_{12} & t_{13} & t_{14} & t_{15} \\
t_{21} & t_{22} & t_{23} & t_{24} & t_{25} \\
0 & 0 & 0 & -1 & 0 \\
0 & 0 & 0 & 0 & -1 \\
0 & -1 & 0 & 0 & 0
\end{array} \right) \\
&\hspace{11mm}=\mathrm{det}\left( \begin{array}{ccccc} 
t_{11} & t_{12} & t_{13} & t_{14} & t_{15} \\
t_{21} & t_{22} & t_{23} & t_{24} & t_{25} \\
0 & 0 & 0 & -1 & 0 \\
0 & 0 & 0 & 0 & -1 \\
0 & -1 & 0 & 0 & 0
\end{array} \right) \\
&\hspace{11mm}=\mathrm{det}\left( \begin{array}{ccc} t_{11} & t_{13} \\ t_{21} & t_{23} \end{array} \right) \not=0.
\end{align*}
\end{proof}
In general, how to choose a matrix $\delta $ in Lemma \ref{delta} is not unique. We explain the nonuniqueness of the choice of a matrix $\delta $ in Lemma \ref{delta} and related discussions in section 4 (cf. Proposition \ref{derived}).

Now, we consider how to define a mirror partner of the complex torus $T^{2n}_{J=T}$. As described in section 2, we can consider the complexified symplectic torus $T^{2n}_{\tilde{\omega } =T}$ as a mirror partner of the complex torus $T^{2n}_{J=T}$ (see also Definition \ref{mirror2}). On the other hand, since we can choose a matrix $\delta \in M(n;\mathbb{Z})$ such that $T-\delta $ is a non-singular matrix by Lemma \ref{delta}, we fix such a matrix $\delta $. By using this matrix $\delta $, we define the matrix
\begin{equation*}
\mathcal{D}:=\left( \begin{array}{ccc} O & \delta \\ -\delta ^t & O \end{array} \right)\in M(2n;\mathbb{Z}),
\end{equation*}
and transform the generalized complex structure $\mathcal{I}_{\tilde{\omega } =T}=\mathcal{I}_{\mathrm{Im}T}(\mathrm{Re}T)$ of the complexified symplectic torus $T^{2n}_{\tilde{\omega } =T}$ as follows.
\begin{align}
&\left( \begin{array}{ccc} I_{2n} & O \\ \mathcal{D} & I_{2n} \end{array} \right)\mathcal{I}_{\tilde{\omega } =T} \left( \begin{array}{ccc} I_{2n} & O \\ -\mathcal{D} & I_{2n} \end{array} \right) \notag \\
&=\left( \begin{array}{ccc} I_{2n} & O \\ \mathcal{D} & I_{2n} \end{array} \right) \Biggl\{ \left( \begin{array}{ccc} I_{2n} & O \\ \tilde{B} & I_{2n} \end{array} \right) \mathcal{I}_{\mathrm{Im}T} \left( \begin{array}{ccc} I_{2n} & O \\ -\tilde{B} & I_{2n} \end{array} \right) \Biggr\} \left( \begin{array}{ccc} I_{2n} & O \\ -\mathcal{D} & I_{2n} \end{array} \right). \label{Dtransform}
\end{align}
Here, 
\begin{equation*}
\tilde{B} =\left( \begin{array}{ccc} O & -\mathrm{Re}T \\ (\mathrm{Re}T)^t & O \end{array} \right).
\end{equation*}
In particular, we can regard the formula (\ref{Dtransform}) as 
\begin{equation*}
\left( \begin{array}{ccc} I_{2n} & O \\ \tilde{B} +\mathcal{D} & I_{2n} \end{array} \right) \mathcal{I}_{\mathrm{Im}T} \left( \begin{array}{ccc} I_{2n} & O \\ -(\tilde{B} +\mathcal{D}) & I_{2n} \end{array} \right),
\end{equation*}
so by this transform, the symplectic structure $\mathrm{Im}T$ of $T^{2n}_{\tilde{\omega } =T}$ is preserved and the B-field $\mathrm{Re}T$ of $T^{2n}_{\tilde{\omega } =T}$ is only transformed. In fact, the complexified symplectic form corresponding to the formula (\ref{Dtransform}) is given by the non-singular matrix
\begin{equation*}
T-\delta =(\mathrm{Re}T-\delta )+\mathbf{i}\mathrm{Im}T.
\end{equation*}
Concerning the above discussions, \textbf{we propose treating the complexfied symplectic torus} 
\begin{equation*}
T^{2n}_{\tilde{\omega } =T-\delta }
\end{equation*}
\textbf{as a mirror partner of the complex torus} 
\begin{equation*}
T^{2n}_{J=T}. 
\end{equation*}
We discuss the homological mirror symmetry on this mirror pair $(\check{T}^{2n}_{J=T} :=T^{2n}_{\tilde{\omega } =T-\delta }, T^{2n}_{J=T})$ in section 4.

\section{The homological mirror symmetry setting for $(\check{T}^{2n}_{J=T},T^{2n}_{J=T})$}
In this section, we define a class of holomorphic vector bundles on $T^{2n}_{J=T}$ and their mirror dual objects on $\check{T}^{2n}_{J=T}$. More precisely, we first construct a biholomorphic map $\varphi : T^{2n}_{J=T} \stackrel{\sim }{\rightarrow } T^{2n}_{J=T'}$, where $T'$ is a non-singular matrix, and consider a class of holomorphic vector bundles $E\rightarrow T^{2n}_{J=T'}$ based on the SYZ construction. We then employ holomorphic vector bundles $\varphi^*E \rightarrow T^{2n}_{J=T}$ as objects in the complex geometry side, and define their mirror dual objects on $\check{T}^{2n}_{J=T}$ (Theorem \ref{bijection2}).

Let us denote the local complex coordinates of $T^{2n}_{J=T}=\mathbb{C}^n/2\pi (\mathbb{Z}^n\oplus T\mathbb{Z}^n)$ by $z=x+Ty$, where 
\begin{equation*}
z:=(z_1,\cdots, z_n)^t,\ x:=(x_1,\cdots, x_n)^t,\ y:=(y_1,\cdots, y_n)^t.
\end{equation*}
In general, for two complex tori $T^{2n}_{J=T}$, $T^{2n}_{J=T'}$, $T^{2n}_{J=T}$ and $T^{2n}_{J=T'}$ are biholomorphic if and only if $T'=(T\mathscr{C}+\mathscr{A})^{-1}(T\mathscr{D}+\mathscr{B})$, where $\mathscr{A}$, $\mathscr{B}$, $\mathscr{C}$, $\mathscr{D}\in M(n;\mathbb{Z})$ and 
\begin{equation*}
\left( \begin{array}{ccc} \mathscr{A} & \mathscr{B} \\ \mathscr{C} & \mathscr{D} \end{array} \right)\in GL(2n;\mathbb{Z}).
\end{equation*}
Here, we set
\begin{equation*}
\left( \begin{array}{ccc} \mathscr{A} & \mathscr{B} \\ \mathscr{C} & \mathscr{D} \end{array} \right) = \left( \begin{array}{ccc} \delta & I_n \\ -I_n & O \end{array} \right)\in SL(2n;\mathbb{Z})\subset GL(2n;\mathbb{Z}),
\end{equation*}
namely,
\begin{equation*}
T'=(-T+\delta )^{-1}.
\end{equation*}
Let us denote the local complex coordinates of $T^{2n}_{J=T'}=T^{2n}_{J=(-T+\delta )^{-1}}=\mathbb{C}^n/2\pi (\mathbb{Z}^n\oplus (-T+\delta )^{-1}\mathbb{Z}^n)$ by $Z=X+T'Y=X+(-T+\delta )^{-1}Y$, where
\begin{equation*}
Z:=(Z_1,\cdots, Z_n)^t,\ X:=(X_1,\cdots, X_n)^t,\ Y:=(Y_1,\cdots, Y_n)^t.
\end{equation*}
Then, a biholomorphic map $\varphi : T^{2n}_{J=T} \stackrel{\sim }{\rightarrow } T^{2n}_{J=T'}$ is given by
\begin{equation*}
\varphi (z)=(-T+\delta )^{-1} z.
\end{equation*}
When we regard complex manifolds $T^{2n}_{J=T}$ and $T^{2n}_{J=T'}$ as real differentiable manifolds $\mathbb{R}^{2n}/2\pi \mathbb{Z}^{2n}$, the biholomorphic map $\varphi $ is regarded as the diffeomorphism $\varphi : \mathbb{R}^{2n}/2\pi \mathbb{Z}^{2n} \stackrel{\sim }{\rightarrow } \mathbb{R}^{2n}/2\pi \mathbb{Z}^{2n}$ such that
\begin{equation*}
\varphi \left( \begin{array}{ccc} x \\ y \end{array} \right) = \left( \begin{array}{ccc} O & -I_n \\ I_n & \delta \end{array} \right) \left( \begin{array}{ccc} x \\ y \end{array} \right).
\end{equation*}

We consider the complexified symplectic torus $T^{2n}_{\tilde{\omega } =-(T'^{-1})^t}$ as a mirror partner of the complex torus $T^{2n}_{J=T'}$, i.e., $\check{T}^{2n}_{J=T'}:=T^{2n}_{\tilde{\omega } =-(T'^{-1})^t}$ (see also Definition \ref{mirror1}). We denote the local coordinates of $\check{T}^{2n}_{J=T'}$ by $(X^1,\cdots X^n,Y^1,\cdots,Y^n)^t$, and define
\begin{equation*}
\check{X}:=(X^1,\cdots, X^n)^t,\ \check{Y}:=(Y^1,\cdots, Y^n).
\end{equation*}

Now, we explain the homological mirror symmetry setting for $(\check{T}^{2n}_{J=T'},T^{2n}_{J=T'})$ following \cite{kazushi4}. This description is also based on the SYZ construction \cite{SYZ} (cf. \cite{leung}, \cite{A-P}). We assume $r\in \mathbb{N}$, $A=(a_{ij})\in M(n;\mathbb{Z})$ and $\mu :=(\mu _1,\cdots, \mu _n)^t \in \mathbb{C}^n$. Sometimes we denote $\mu =p+T'^tq$ with $p:=(p_1,\cdots, p_n)^t\in \mathbb{R}^n$, $q:=(q_1,\cdots, q_n)^t\in \mathbb{R}^n$. 

First, we explain the complex geometry side, namely, define a class of holomorphic vector bundles
\begin{equation*}
E_{(r,A,\mu, \mathcal{U})} \rightarrow T^{2n}_{J=T'}. 
\end{equation*}
We first construct it as a complex vector bundle, and then discuss when it becomes a holomorphic vector bundle later in Proposition \ref{holomorphic1}. We define $r'\in \mathbb{N}$ by using a given pair $(r,A)\in \mathbb{N}\times M(n;\mathbb{Z})$ as follows. By the theory of elementary divisors, there exist two matrices $\mathcal{A}$, $\mathcal{B}\in GL(n;\mathbb{Z})$ such that 
\begin{equation}
\mathcal{A}A\mathcal{B}=\left( \begin{array}{cccccc} \tilde{a_1} & & & & & \\ & \ddots & & & & \\ & & \tilde{a_s} & & & \\ & & & 0 & & \\ & & & & \ddots & \\ & & & & & 0 \end{array} \right), \label{matAB}
\end{equation}
where $\tilde{a_i}\in \mathbb{N}$ ($i=1,\cdots, s, 1\leq s\leq n$) and $\tilde{a_i}|\tilde{a_{i+1}}$ ($i=1,\cdots, s-1$). Then, we define $r_i'\in \mathbb{N}$ and $a_i'\in \mathbb{Z}$ ($i=1,\cdots, s$) by
\begin{equation*}
\frac{\tilde{a_i}}{r}=\frac{a_i'}{r_i'}, \ gcd(r_i',a_i')=1,
\end{equation*}
where $gcd(m,n)>0$ denotes the greatest common divisor of $m$, $n\in \mathbb{Z}$. By using these, we set
\begin{equation}
r':=r_1'\cdots r_s'\in \mathbb{N}. \label{r'}
\end{equation}
This $r'\in \mathbb{N}$ is uniquely defined by a given pair $(r,A)\in \mathbb{N}\times M(n;\mathbb{Z})$, and it is actually the rank of $E_{(r,A,\mu, \mathcal{U})}$ (in this sense, although we should also emphasize $r'\in \mathbb{N}$ when we denote $E_{(r,A,\mu, \mathcal{U})}$, for simplicity, we use the notation $E_{(r,A,\mu, \mathcal{U})}$ in this paper). In order to define holomorphic vector bundles $E_{(r,A,\mu, \mathcal{U})}$, of course, we must define transition functions of them. However, since the notations of transition functions of $E_{(r,A,\mu, \mathcal{U})}$ are complicated, we only give the rough definition of transition functions of $E_{(r,A,\mu, \mathcal{U})}$, here (details of these discussions are described in subsection 3.1 of \cite{kazushi4}). Let 
\begin{equation*}
s(X_1,\cdots, X_n,Y_1,\cdots, Y_n)
\end{equation*}
be a smooth section of $E_{(r,A,\mu ,\mathcal{U})}$. Then, we define the transition functions of $E_{(r,A,\mu ,\mathcal{U})}$ as follows, where $V_j\in U(r')$, $U_k\in U(r')$ $(j,k=1,\cdots,n)$ and $a_j:=(a_{1j},\cdots,a_{nj})$.
\begin{align*}
&s(X_1,\cdots, X_j+2\pi ,\cdots, X_n, Y_1,\cdots, Y_n)=e^{\frac{\mathbf{i}}{r}a_j Y}V_j \cdot s(X_1,\cdots, X_n, Y_1,\cdots, Y_n), \\
&s(X_1,\cdots, X_n, Y_1,\cdots, Y_k+2\pi , \cdots, Y_n)=U_k \cdot s(X_1,\cdots, X_n, Y_1,\cdots, Y_n).
\end{align*}
In particular, the cocycle condition is expressed as 
\begin{equation*}
V_j V_k =V_k V_j, \ U_j U_k =U_k U_j,\ \zeta ^{-a_{kj}}U_k V_j =V_j U_k,
\end{equation*}
where $\zeta $ is the $r$-th root of 1. We define the set $\mathcal{U}$ of unitary matrices by
\begin{align*}
\mathcal{U}:=\Bigl\{ V_j,\ U_k \in U(r') \ | \ &V_j V_k =V_k V_j, \ U_j U_k =U_k U_j,\ \zeta ^{-a_{kj}}U_k V_j =V_j U_k, \\
&j,k=1,\cdots, n \Bigr\}.
\end{align*}
Of course, how to define the set $\mathcal{U}$ relates closely to (in)decomposability of $E_{(r,A,\mu, \mathcal{U})}$. Here, although we only treat the set $\mathcal{U}$ such that $E_{(r,A,\mu, \mathcal{U})}$ is simple, actually, it is known that the following proposition holds (see \cite[Proposition 3.2]{kazushi4}).
\begin{proposition}
For each quadruple $(r,A,p,q)\in \mathbb{N} \times M(n;\mathbb{Z}) \times \mathbb{R}^n \times \mathbb{R}^n$, we can take a set $\mathcal{U} \not= \emptyset$ such that $E_{(r,A,\mu, \mathcal{U})}$ is simple.
\end{proposition}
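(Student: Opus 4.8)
The plan is to write down an explicit admissible family $\mathcal{U}$ built from Weyl (``clock--and--shift'') matrices and then to deduce simplicity from the fact that this family generates the whole matrix algebra $M(r';\mathbb{C})$. For $m\in\mathbb{N}$ put $\omega_m:=e^{2\pi\mathbf{i}/m}$, let $Q_m:=\mathrm{diag}(1,\omega_m,\dots,\omega_m^{m-1})$ and let $P_m\in U(m)$ be the cyclic shift; these are unitary, satisfy $P_mQ_m=\omega_mQ_mP_m$, and it is classical that $P_m^a$ together with $Q_m$ generate $M(m;\mathbb{C})$ whenever $\gcd(a,m)=1$. Recall also that $\tilde a_i/r=a_i'/r_i'$ with $\gcd(r_i',a_i')=1$ forces $r_i'\mid r$, so $\omega_{r_i'}=\zeta^{r/r_i'}$.

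First I would reduce to the case in which $A$ is in Smith normal form. Given $\mathcal{A},\mathcal{B}\in GL(n;\mathbb{Z})$ as in $(\ref{matAB})$, replacing a pair of $n$-tuples $(V_1,\dots,V_n)$ and $(U_1,\dots,U_n)$ by suitable $\mathbb{Z}$-monomial combinations of the $V_l$ and of the $U_l$ determined by $\mathcal{B}$ and $\mathcal{A}$ is well defined, because the $V_l$ commute with one another and the $U_l$ commute with one another; it is invertible over $\mathbb{Z}$, it carries admissible families for $A$ to admissible families for $\tilde A:=\mathcal{A}A\mathcal{B}$, and it leaves unchanged the unital subalgebra of $M(r';\mathbb{C})$ generated, while $r'$ in $(\ref{r'})$ depends only on the Smith normal form and so is untouched. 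Hence we may assume that $a_{kj}=\tilde a_j$ when $k=j\le s$ and $a_{kj}=0$ otherwise. Identify $\mathbb{C}^{r'}=\mathbb{C}^{r_1'}\otimes\cdots\otimes\mathbb{C}^{r_s'}$; for $j\le s$ let $V_j$ act as $P_{r_j'}^{-a_j'}$ on the $j$-th tensor factor and as the identity on the others, let $U_j$ act as $Q_{r_j'}$ on the $j$-th factor and as the identity on the others, and for $j>s$ set $V_j=U_j:=I_{r'}$; these are unitary of size $r'$. Then $[V_j,V_k]=[U_j,U_k]=0$ since the nontrivial factors occupy distinct slots, and because $a_{jj}=\tilde a_j=ra_j'/r_j'$, a one-line computation with $Q_mP_m^{-1}=\omega_mP_m^{-1}Q_m$ gives $U_kV_j=\zeta^{a_{kj}}V_jU_k$, i.e.\ the cocycle relation $\zeta^{-a_{kj}}U_kV_j=V_jU_k$. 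Therefore this family lies in $\mathcal{U}$, so $\mathcal{U}\neq\emptyset$; moreover $\gcd(r_j',a_j')=1$, so $P_{r_j'}^{-a_j'}$ and $Q_{r_j'}$ generate $M(r_j';\mathbb{C})$, whence the whole family generates $\bigotimes_{i=1}^sM(r_i';\mathbb{C})=M(r';\mathbb{C})$.

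Finally, simplicity follows: an endomorphism of $E_{(r,A,\mu,\mathcal{U})}$ compatible with its projectively flat structure is, in a flat frame, a constant matrix $\Phi\in M(r';\mathbb{C})$ intertwining the transition functions $e^{\frac{\mathbf{i}}{r}a_jY}V_j$ and $U_k$, and since the scalar factors cancel under conjugation $\Phi$ commutes with every $V_j$ and $U_k$ (a general holomorphic endomorphism reduces to this constant case by the standard argument, cf.\ \cite{kazushi4}); by the previous paragraph the commutant of $\{V_j,U_k\}$ in $M(r';\mathbb{C})$ is $\mathbb{C}I_{r'}$, so $\Phi$ is scalar and $E_{(r,A,\mu,\mathcal{U})}$ is simple. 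Note that the family just constructed does not involve $\mu$, so it serves for every quadruple $(r,A,p,q)$ with the given $(r,A)$. The main obstacle is the compatibility in the middle paragraph: the exponents of the clock--and--shift factors must be chosen so that \emph{all} the relations $\zeta^{-a_{kj}}U_kV_j=V_jU_k$ hold simultaneously while the matrices still generate the full matrix algebra, and this is exactly what the reduction to Smith normal form together with the definition $(\ref{r'})$ of $r'$ (and its coprimality) is designed to achieve; the mutual commutativity of the $V_j$ and of the $U_k$, and the passage from ``generates $M(r';\mathbb{C})$'' to simplicity, are then routine.
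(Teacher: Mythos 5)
The paper itself contains no proof of this proposition: it is imported verbatim from \cite[Proposition 3.2]{kazushi4} (``it is known that the following proposition holds''), so there is no in-paper argument to compare yours against line by line. Your proposal supplies the missing content, and it is essentially the standard one for such Heisenberg-type cocycles, which is also the spirit of the cited reference: reduce $A$ to its Smith normal form and build $\mathcal{U}$ out of clock-and-shift matrices on $\mathbb{C}^{r'}=\mathbb{C}^{r_1'}\otimes\cdots\otimes\mathbb{C}^{r_s'}$. The key verifications go through: since $\tilde{a_j}/r=a_j'/r_j'$ with $gcd(r_j',a_j')=1$ one has $r_j'\mid r$ and $\zeta^{\tilde{a_j}}=\omega_{r_j'}^{a_j'}$, so the diagonal relations $\zeta^{-a_{jj}}U_jV_j=V_jU_j$ hold exactly (the off-diagonal ones are trivial because the factors sit in different tensor slots, matching $a_{kj}=0$); the passage between admissible families for $A$ and for $\mathcal{A}A\mathcal{B}$ is legitimate because the commutators $U_kV_jU_k^{-1}V_j^{-1}$ are central scalars, hence depend $\mathbb{Z}$-bilinearly on monomial exponents, and the change of generators is invertible over $\mathbb{Z}$ and leaves both $r'$ and the generated subalgebra of $M(r';\mathbb{C})$ unchanged; and coprimality gives generation of the full matrix algebra, so the commutant of $\{V_j,U_k\}$ is $\mathbb{C}I_{r'}$. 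The one place where you lean on an external ``standard argument'' is the reduction of an arbitrary holomorphic endomorphism to a constant matrix intertwining the transition functions: this is where the holomorphic content of simplicity lives, and to be self-contained you should record why it holds (e.g.\ $\mathrm{End}(E_{(r,A,\mu,\mathcal{U})})$ is the flat Hermitian bundle with constant unitary transition functions $\mathrm{Ad}(V_j)$, $\mathrm{Ad}(U_k)$ — the scalar factors $e^{\frac{\mathbf{i}}{r}a_jY}$ cancel — and holomorphic sections of a flat unitary bundle on a compact complex torus are parallel, so they are exactly the constant matrices in that commutant); in \cite{kazushi4} this is done by a direct computation of the degree-zero morphism space of the DG-category. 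Minor points: the sign convention in $P_mQ_m=\omega_mQ_mP_m$ versus its inverse only affects whether you take $P_{r_j'}^{-a_j'}$ or $P_{r_j'}^{a_j'}$, and your observation that $\mathcal{U}$ does not involve $\mu$ correctly disposes of the quantification over $(p,q)$.
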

Furthermore, we define a connection $\nabla_{(r,A,\mu, \mathcal{U})}$ on $E_{(r,A,\mu, \mathcal{U})}$ locally as 
\begin{equation*}
\nabla_{(r,A,\mu, \mathcal{U})}=d+\omega_{(r,A,\mu, \mathcal{U})}:=d-\frac{\mathbf{i}}{2\pi } \left(  \frac{1}{r}X^t A^t +\frac{1}{r}\mu^t \right) dY\cdot I_{r'},
\end{equation*}
where $dY:=(dY_1,\cdots,dY_n)^t$ and $d$ denotes the exterior derivative. In fact, $\nabla_{(r,A,\mu, \mathcal{U})}$ is compatible with the transition functions and so defines a global connection. 
\begin{rem} \label{Tinverse}
As described above, in the local expression of the connection 1-form 
\begin{equation*}
\omega _{(r,A,\mu ,\mathcal{U})}=-\frac{\mathbf{i}}{2\pi } \left( \frac{1}{r}X^t A^t +\frac{1}{r}p^t \right) dY\cdot I_{r'} -\frac{\mathbf{i}}{2\pi }\frac{1}{r}q^t (-T+\delta )^{-1} dY\cdot I_{r'},
\end{equation*}
the second term of the right hand side includes the non-singular matrix $T'=(-T+\delta )^{-1}$. Actually, this part is associated to the unitary holonomy of the unitary local system which is defined as the corresponding mirror dual object. If we try to discuss the homological mirror symmetry on the mirror pair $(\check{T}^{2n}_{J=T}:=T^{2n}_{\tilde{\omega } =T},T^{2n}_{J=T})$ which is obtained by using Definition \ref{mirror2}, for each object in the complex geometry side, the corresponding ``unitary holonomy part'' includes $T^{-1}$.
\end{rem}
The curvature form $\Omega _{(r,A,\mu, \mathcal{U})}$ of the connection $\nabla_{(r,A,\mu, \mathcal{U})}$ is expressed locally as
\begin{equation}
\Omega _{(r,A,\mu, \mathcal{U})}=-\frac{\mathbf{i}}{2\pi r}dX^t A^t dY\cdot I_{r'}, \label{curvature}
\end{equation}
where $dX:=(dX_1,\cdots,dX_n)^t$. Here, we consider the condition such that $E_{(r,A,\mu, \mathcal{U})}$ is holomorphic. We see that the following proposition holds.
\begin{proposition}\label{holomorphic1}
For a given quadruple $(r,A,p,q)\in \mathbb{N}\times M(n;\mathbb{Z})\times \mathbb{R}^n\times \mathbb{R}^n$, the complex vector bundle $E_{(r,A,\mu, \mathcal{U})}\rightarrow T^{2n}_{J=T'}$ is holomorphic if and only if $AT'=(AT')^t$ holds.
\end{proposition}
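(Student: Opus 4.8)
The plan is to recall that a complex vector bundle with a connection $\nabla$ becomes a holomorphic vector bundle precisely when the $(0,2)$-part of the curvature vanishes, i.e.\ when $\Omega^{0,2}_{(r,A,\mu,\mathcal{U})}=0$ (the Koszul--Malgrange / Newlander--Nirenberg type criterion); this equips $E_{(r,A,\mu,\mathcal{U})}$ with a holomorphic structure via $\bar\partial_\nabla=\nabla^{0,1}$. So the task reduces to expressing the curvature form $(\ref{curvature})$ in terms of the complex coordinates $Z=X+T'Y$ on $T^{2n}_{J=T'}$ and extracting its $(0,2)$-component.

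First I would invert the coordinate change: from $Z=X+T'Y$ and $\bar Z=X+\bar{T'}Y$ one solves $X$ and $Y$ as real-linear combinations of $Z,\bar Z$, getting $Y=(T'-\bar{T'})^{-1}(Z-\bar Z)$ and $X=(T'-\bar{T'})^{-1}(T'\bar Z-\bar{T'}Z)$ (using that $\mathrm{Im}\,T'$ is positive definite, hence $T'-\bar{T'}=2\mathbf{i}\,\mathrm{Im}\,T'$ is invertible). Differentiating gives $dX$ and $dY$ as combinations of $dZ$ and $d\bar Z$, and hence $dX^t A^t dY$ becomes a sum of a $dZ\wedge dZ$ term, a mixed $dZ\wedge d\bar Z$ term, and a $d\bar Z\wedge d\bar Z$ term. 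The $(0,2)$-part is the coefficient of $d\bar Z^t(\cdots)d\bar Z$; substituting the expressions for $dX,dY$ and collecting, this coefficient is (up to the nonzero scalar $-\frac{\mathbf{i}}{2\pi r}$ and conjugation factors coming from $(T'-\bar{T'})^{-1}$) a matrix of the form $M^t A^t N - N^t A^t M$ for suitable constant matrices $M,N$ built from $T'$ and $\bar{T'}$. The point is that after simplification this antisymmetrized expression vanishes if and only if $AT'$ is symmetric, i.e.\ $AT'=(AT')^t$. (Equivalently, and perhaps more cleanly, I would note that $\Omega$ is the pullback under $\varphi^{-1}$-type bookkeeping of a constant $2$-form, write $dX=\tfrac{1}{T'-\bar T'}(\bar T' \,d\bar Z - \bar T'\,dZ)$... and just track the $d\bar Z\wedge d\bar Z$ coefficient, which is a scalar multiple of $AT' - (AT')^t$.)

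The main obstacle is purely bookkeeping: keeping the matrix factors in the correct order (these are $n\times n$ matrices that do not commute in general) while taking the wedge and antisymmetrizing, so that the $(0,2)$-component genuinely collapses to a scalar multiple of $AT'-(AT')^t$ rather than to some one-sided expression like $AT'-T'^tA^t$ that one must then recognize as the same condition. I expect the cleanest route is to observe that $dX^t A^t dY$, being a $2$-form, already has its "symmetric part in the naive sense" killed by the wedge, so only the genuinely antisymmetric combination survives; writing $dX$ and $dY$ in the $dZ,d\bar Z$ basis and isolating the $(0,2)$ block then yields exactly $\mathrm{const}\cdot\big(AT'-(AT')^t\big)\,d\bar Z\wedge d\bar Z$ (the constant being a nonzero multiple of a power of $\det(\mathrm{Im}\,T')^{-1}$), and $\mathrm{Im}\,T'$ positive definite guarantees this constant is nonzero, so $\Omega^{0,2}=0 \iff AT'=(AT')^t$. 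Finally I would invoke the integrability criterion to conclude that this is precisely the condition for $E_{(r,A,\mu,\mathcal{U})}$ to be holomorphic.
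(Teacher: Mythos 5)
Your proposal is correct and takes essentially the same route as the paper: both invoke the criterion that holomorphicity is equivalent to vanishing of the $(0,2)$-part of the curvature, rewrite the curvature form in the coordinates $Z$, $\bar{Z}$, and observe that the resulting $d\bar{Z}$--$d\bar{Z}$ block is (up to a nonzero constant) a congruence transform of the antisymmetric part of $AT'$ by the invertible matrix $(T'-\bar{T'})^{-1}$, hence vanishes exactly when $AT'=(AT')^t$. (Two minor bookkeeping slips, which you yourself flagged as the delicate point and which do not affect the conclusion: the correct inversion is $X=-\bar{T'}(T'-\bar{T'})^{-1}Z+T'(T'-\bar{T'})^{-1}\bar{Z}$ rather than $(T'-\bar{T'})^{-1}(T'\bar{Z}-\bar{T'}Z)$, and the $(0,2)$-coefficient is a congruence of $AT'-(AT')^t$, not a literal scalar multiple of it.)
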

\begin{proof}
A complex vector bundle is holomorphic if and only if the (0,2)-part of its curvature form vanishes, so we calculate the (0,2)-part of $\Omega _{(r,A,\mu ,\mathcal{U})}$. It turns out to be
\begin{equation*}
\Omega ^{(0,2)}_{(r,A,\mu, \mathcal{U})}=\frac{\mathbf{i}}{2\pi r}d\bar{Z}^t\{ T'(T'-\bar{T'} )^{-1} \}^t A^t (T'-\bar{T'})^{-1}d\bar{Z}\cdot I_{r'} ,
\end{equation*}
where $d\bar{Z} :=(d\bar{Z}_1,\cdots,d\bar{Z}_n)^t$. Thus, $\Omega ^{(0,2)}_{(r,A,\mu, \mathcal{U})}=0$ is equivalent to that $\{ T'(T'-\bar{T'} )^{-1} \}^t A^t (T'-\bar{T'} )^{-1}$ is a symmetric matrix, i.e., $AT'=(AT')^t$.
\end{proof}
Although we omit the expression here, these holomorphic vector bundles $(E_{(r,A,\mu ,\mathcal{U})},\nabla_{(r,A,\mu ,\mathcal{U})})$ form a DG-category 
\begin{equation*}
DG_{T^{2n}_{J=T'}}.
\end{equation*}
The details of this DG-category $DG_{T^{2n}_{J=T'}}$ are described in subsection 3.1 of \cite{kazushi4} (see also section 4 of \cite{kajiura}). In general, for any $A_{\infty}$-category $\mathscr{C}$, we can construct a triangulated category $Tr(\mathscr{C})$ by using the Bondal-Kapranov-Kontsevich construction \cite{bondal}, \cite{Kon}. We expect that this $DG_{T^{2n}_{J=T'}}$ generates the bounded derived category of coherent sheaves $D^b(Coh(T^{2n}_{J=T'}))$ on $T^{2n}_{J=T'}$ in the sense of the Bondal-Kapranov-Kontsevich construction, i.e.,
\begin{equation*}
Tr(DG_{T^{2n}_{J=T'}})\cong D^b(Coh(T^{2n}_{J=T'})).
\end{equation*}
At least, it is known that it split generates $D^b(Coh(T^{2n}_{J=T'}))$ when $T^{2n}_{J=T'}$ is an abelian variety (cf. \cite{orlov}, \cite{abouzaid}).

Next, we explain the symplectic geometry side, namely, define mirror dual objects corresponding to holomorphic vector bundles $E_{(r,A,\mu, \mathcal{U})} \rightarrow T^{2n}_{J=T'}$. We need to consider the Fukaya category in the symplectic geometry side, so below, we recall the definition of objects of the Fukaya categories following \cite[Definition 1.1]{Fuk}. Let $(M,\Omega)$ be a symplectic manifold $(M,\omega)$ together with a closed 2-form $B$ on $M$. Here, we put $\Omega=\omega+\sqrt{-1}B$ (note $-B+\sqrt{-1}\omega$ is used in many of the literatures). Then, we consider pairs $(L,\mathcal{L})$ with the following properties :
\begin{align}
&L \ \rm{is \ a \ Lagrangian \ submanifold \ of} \ (\it{M},\omega). \label{f1} \\ 
&\mathcal{L}\rightarrow L \ \rm{is \ a \ line \ bundle \ together \ with \ a \ connection} \ \nabla^{\mathcal{L}} \ \rm{such \ that} \label{f2} \\
&F_{\nabla^{\mathcal{L}}}=2\pi \sqrt{-1}B|_L. \notag
\end{align}
In this context, $F_{\nabla^{\mathcal{L}}}$ denotes the curvature form of the connection $\nabla^{\mathcal{L}}$. We define objects of the Fukaya category on $(M,\Omega)$ by pairs $(L,\mathcal{L})$ which satisfy the properties (\ref{f1}), (\ref{f2}). Let us consider the following $n$-dimensional submanifold $L_{(r,A,p)}$ in $\check{T}^{2n}_{J=T'}$ :
\begin{equation*}
L_{(r,A,p)}:= \biggl\{ \left( \begin{array}{ccc} \check{X} \\ \check{Y} \end{array} \right)\in \check{T}^{2n}_{J=T'} \ | \ \check{Y}=\frac{1}{r}A\check{X}+\frac{1}{r}p \biggr\}.
\end{equation*}
This $n$-dimensional submanifold $L_{(r,A,p)}$ satisfies the property (\ref{f1}), namely, $L_{(r,A,p)}$ becomes a Lagrangian submanifold in $\check{T}^{2n}_{J=T'}$ if and only if $\mathrm{Im}(-(T'^{-1})^t)A=(\mathrm{Im}(-(T'^{-1})^t)A)^t$ holds. We then consider the trivial complex line bundle $\mathcal{L}_{(r,A,p,q)}\rightarrow L_{(r,A,p)}$ with the flat connection
\begin{equation*}
\nabla_{\mathcal{L}_{(r,A,p,q)}}:=d-\frac{\mathbf{i}}{2\pi}\frac{1}{r}q^t d\check{X}.
\end{equation*}
Note that $q\in \mathbb{R}^n$ is the unitary holonomy of $\mathcal{L}_{(r,A,p,q)}$ along $L_{(r,A,p)}\approx T^n$. We discuss the property (\ref{f2}) for this pair $(L_{(r,A,p)},\mathcal{L}_{(r,A,p,q)})$ : 
\begin{equation*}
\Omega _{\mathcal{L}_{(r,A,p,q)}}=d\check{X}^t \mathrm{Re}(-(T'^{-1})^t)d\check{Y}\mid _{L_{(r,A,p)}}.
\end{equation*}
Here, $\Omega _{\mathcal{L}_{(r,A,\mu ,\mathcal{U})}}$ is the curvature form of the flat connection $\nabla_{\mathcal{L}_{(r,A,p,q)}}$ on $\mathcal{L}_{(r,A,p,q)}$, i.e., $\Omega _{\mathcal{L}_{(r,A,p,q)}}=0$. Hence, we see 
\begin{equation*}
d\check{X}^t \mathrm{Re}(-(T'^{-1})^t) d\check{Y}\mid _{L_{(r,A,p)}} = \frac{1}{r}d\check{X}^t \mathrm{Re}(-(T'^{-1})^t) Ad\check{X} =0,
\end{equation*}
so one has $\mathrm{Re}(-(T'^{-1})^t) A =(\mathrm{Re}(-(T'^{-1})^t) A)^t$. Note that $\mathrm{Im}(-(T'^{-1})^t)A=(\mathrm{Im}(-(T'^{-1})^t)A)^t$ and $\mathrm{Re}(-(T'^{-1})^t) A =(\mathrm{Re}(-(T'^{-1})^t) A)^t$ hold if and only if $AT'=(AT')^t$ holds. Thus, we obtain the following proposition.
\begin{proposition} \label{Fukobject1}
For a given quadruple $(r,A,p,q)\in \mathbb{N} \times M(n;\mathbb{Z}) \times \mathbb{R}^n \times \mathbb{R}^n$, $(L_{(r,A,p)}, \mathcal{L}_{(r,A,p,q)})$ gives an object of the Fukaya category on $\check{T}^{2n}_{J=T'}$ if and only if $AT'=(AT')^t$ holds.
\end{proposition}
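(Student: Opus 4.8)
The plan is to establish the two conditions in the definition of a Fukaya-category object — the Lagrangian condition \eqref{f1} for $L_{(r,A,p)}$ and the curvature compatibility \eqref{f2} for the pair $(L_{(r,A,p)},\mathcal{L}_{(r,A,p,q)})$ — and to show that, together, they are equivalent to the single matrix equation $AT'=(AT')^t$. Since the computations needed here have essentially already been carried out in the paragraph preceding the statement, the proof is mostly a matter of assembling those observations and checking the final linear-algebra equivalence; I expect no serious obstacle.

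First I would treat \eqref{f1}. The symplectic form on $\check{T}^{2n}_{J=T'}=T^{2n}_{\tilde\omega=-(T'^{-1})^t}$ is $\mathrm{Im}(-(T'^{-1})^t)$ in the $d\check X\wedge d\check Y$ basis, so restricting it to $L_{(r,A,p)}$, where $d\check Y=\frac{1}{r}A\,d\check X$, gives $\frac{1}{r}\,d\check X^t\,\mathrm{Im}(-(T'^{-1})^t)A\,d\check X$. This $2$-form vanishes identically precisely when $\mathrm{Im}(-(T'^{-1})^t)A$ is symmetric, which is the Lagrangian condition; here one also notes $\dim_{\mathbb R}L_{(r,A,p)}=n=\tfrac12\dim_{\mathbb R}\check T^{2n}_{J=T'}$, so $L_{(r,A,p)}$ is genuinely Lagrangian and not merely isotropic.

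Next I would treat \eqref{f2} for the trivial line bundle $\mathcal{L}_{(r,A,p,q)}$ with the flat connection $\nabla_{\mathcal{L}_{(r,A,p,q)}}=d-\frac{\mathbf i}{2\pi}\frac1r q^t\,d\check X$. Its curvature is zero, so \eqref{f2} demands $2\pi\sqrt{-1}\,B|_{L_{(r,A,p)}}=0$, where $B=\mathrm{Re}(-(T'^{-1})^t)$ is the $B$-field part of the complexified symplectic form. Restricting $d\check X^t\,\mathrm{Re}(-(T'^{-1})^t)\,d\check Y$ to $L_{(r,A,p)}$ in the same way produces $\frac1r\,d\check X^t\,\mathrm{Re}(-(T'^{-1})^t)A\,d\check X$, which vanishes iff $\mathrm{Re}(-(T'^{-1})^t)A$ is symmetric.

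Finally I would verify the claimed equivalence: $\mathrm{Im}(-(T'^{-1})^t)A$ and $\mathrm{Re}(-(T'^{-1})^t)A$ are both symmetric if and only if $\big(-(T'^{-1})^t\big)A$ is symmetric, i.e. $A^t\,(T'^{-1})=(T'^{-1})\,A$, and then multiplying this relation on both sides by $T'$ shows it is equivalent to $T'A^t=AT'$, that is $AT'=(AT')^t$. (One uses here that $T'=(-T+\delta)^{-1}$ is non-singular by Lemma \ref{delta}, so passing between $T'$ and $T'^{-1}$ is harmless.) Combining the three steps yields exactly the statement of Proposition \ref{Fukobject1}. The only point requiring any care is the bookkeeping of transposes and of real/imaginary parts in this last equivalence; everything else is a direct restriction computation.
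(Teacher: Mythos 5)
Your proposal is correct and follows essentially the same route as the paper: the paper's own argument is exactly the paragraph preceding the proposition, checking (\ref{f1}) via the restriction $\frac{1}{r}d\check{X}^t\,\mathrm{Im}(-(T'^{-1})^t)A\,d\check{X}=0$ and (\ref{f2}) via vanishing of $\frac{1}{r}d\check{X}^t\,\mathrm{Re}(-(T'^{-1})^t)A\,d\check{X}$ (since $\nabla_{\mathcal{L}_{(r,A,p,q)}}$ is flat), and then noting the two symmetry conditions combine to $AT'=(AT')^t$. Your final linear-algebra step, using invertibility of $T'$ to pass from symmetry of $(-(T'^{-1})^t)A$ to $AT'=(AT')^t$, is the same equivalence the paper asserts, just spelled out in slightly more detail.
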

Hereafter, we denote the full subcategory of the Fukaya category on $\check{T}^{2n}_{J=T'}$ consisting of objects $(L_{(r,A,p)},\mathcal{L}_{(r,A,p,q)})$ which satisfy the condition $AT'=(AT')^t$ by 
\begin{equation*}
Fuk_{\rm aff}(\check{T}^{2n}_{J=T'}).
\end{equation*}
We also give rough explanations from the viewpoint of the SYZ construction. We can regard the complexified symplectic torus $\check{T}^{2n}_{J=T'}$ as the trivial special Lagrangian torus fibration $\check{\pi} : \check{T}^{2n}_{J=T'}\rightarrow \mathbb{R}^n/2\pi\mathbb{Z}^n$, where $\check{X}$ is the local coordinates of the base space $\mathbb{R}^n/2\pi\mathbb{Z}^n$ and $\check{Y}$ is the local coordinates of the fiber of $\check{\pi} : \check{T}^{2n}_{J=T'}\rightarrow \mathbb{R}^n/2\pi\mathbb{Z}^n$. Then, we can interpret each affine Lagrangian submanifold $L_{(r,A,p)}$ in $\check{T}^{2n}_{J=T'}$ as the affine Lagrangian multi section
\begin{equation*}
s(\check{X})=\frac{1}{r}A\check{X}+\frac{1}{r}p
\end{equation*}
of $\check{\pi} : \check{T}^{2n}_{J=T'}\rightarrow \mathbb{R}^n/2\pi\mathbb{Z}^n$. Furthermore, while $r':=r_1'\cdots r_s'\in \mathbb{N}$ is the rank of $E_{(r,A,\mu,\mathcal{U})}\rightarrow T^{2n}_{J=T'}$ (see the relations (\ref{matAB}) and (\ref{r'})), in the symplectic geometry side, this $r'\in \mathbb{N}$ is interpreted as the multiplicity of $s(\check{X})=\frac{1}{r}A\check{X}+\frac{1}{r}p$.

Concerning the above discussions (in particular, Proposition \ref{holomorphic1} and Proposition \ref{Fukobject1}), we can expect that two $A_{\infty}$-categories $DG_{T^{2n}_{J=T'}}$ and $Fuk_{\rm aff}(\check{T}^{2n}_{J=T'})$ play an important role in the proof of the homological mirror symmetry conjecture on $(T^{2n}_{J=T'}, \check{T}^{2n}_{J=T'})$. Actually, it is known that the following theorem holds (\cite[Theorem 5.1]{kazushi4}). Note that two notations $p(\theta)$ and $q(\xi)$ in the following theorem are defined as follows. For elements $V_j$, $U_k\in \mathcal{U}$ $(j,k=1,\cdots, n)$, we define $\xi_j$, $\theta_k\in \mathbb{R}$ by
\begin{equation*}
e^{\mathbf{i}\xi_j}=\mathrm{det}V_j, \ e^{\mathbf{i}\theta_k}=\mathrm{det}U_k,
\end{equation*}
and set
\begin{equation*}
\xi:=(\xi_1,\cdots, \xi_n)^t, \ \theta:=(\theta_1,\cdots, \theta_n)^t.
\end{equation*}
Then, we define
\begin{equation*}
p(\theta):=p-\frac{r}{r'}\theta, \ q(\xi):=q+\frac{r}{r'}\xi.
\end{equation*}
\begin{theo}\label{bijection1}
A map $\mathrm{Ob}(DG_{T^{2n}_{J=T'}}) \rightarrow \mathrm{Ob}(Fuk_{\rm aff}(\check{T}^{2n}_{J=T'}))$ is defined by
\begin{equation*}
E_{(r,A,\mu, \mathcal{U})} \mapsto (L_{(r,A,p(\theta))}, \mathcal{L}_{(r,A,p(\theta),q(\xi))}),
\end{equation*}
and it induces a bijection between $\mathrm{Ob}^{isom}(DG_{T^{2n}_{J=T'}})$ and $\mathrm{Ob}^{isom}(Fuk_{\rm aff}(\check{T}^{2n}_{J=T'}))$, where $\mathrm{Ob}^{isom}(DG_{T^{2n}_{J=T'}})$ and $\mathrm{Ob}^{isom}(Fuk_{\rm aff}(\check{T}^{2n}_{J=T'}))$ denote the set of the isomorphism classes of objects of $DG_{T^{2n}_{J=T'}}$ and the set of the isomorphism classes of objects of $Fuk_{\rm aff}(\check{T}^{2n}_{J=T'})$, respectively\footnote{We consider affine Lagrangian submanifolds only in this paper, so two objects $(L_{(r,A,p)}, \mathcal{L}_{(r,A,p,q)})$, $(L_{(s,B,u)}, \mathcal{L}_{(s,B,u,v)})\in Fuk_{\rm aff}(\check{T}^{2n}_{J=T'})$ are isomorphic to each other if and only if $L_{(r,A,p)}=L_{(s,B,u)}$ and $\mathcal{L}_{(r,A,p,q)}\cong \mathcal{L}_{(s,B,u,v)}$.}.
\end{theo}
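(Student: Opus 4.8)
The statement is in essence \cite[Theorem 5.1]{kazushi4}, so I would organize the proof into four steps, all carried out on the \emph{non-singular} torus $T^{2n}_{J=T'}$ (and its mirror $\check T^{2n}_{J=T'}$), so that the singular matrix $T$ never enters. \textbf{Step 1 (the assignment sends objects to objects).} By Proposition \ref{holomorphic1}, $E_{(r,A,\mu ,\mathcal{U})}$ is an object of $DG_{T^{2n}_{J=T'}}$ precisely when $AT'=(AT')^t$; by Proposition \ref{Fukobject1}, this is exactly the condition for $(L_{(r,A,p(\theta ))},\mathcal{L}_{(r,A,p(\theta ),q(\xi ))})$ to be an object of $Fuk_{\rm aff}(\check T^{2n}_{J=T'})$, since replacing $p$ by $p(\theta )=p-\tfrac{r}{r'}\theta$ and $q$ by $q(\xi )=q+\tfrac{r}{r'}\xi$ changes neither the slope matrix $\tfrac 1r A$ that governs the Lagrangian condition nor the isotropy of the flat connection. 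Hence the map is well defined on objects.

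\textbf{Step 2 (classification of isomorphisms on the complex side).} Next I would determine when $E_{(r,A,\mu ,\mathcal{U})}\cong E_{(\tilde r,\tilde A,\tilde\mu ,\tilde{\mathcal{U}})}$. Writing a holomorphic bundle isomorphism in the local frames and comparing it against the transition functions $e^{\frac{\mathbf{i}}{r}a_jY}V_j$, $U_k$ (subject to $V_jV_k=V_kV_j$, $U_jU_k=U_kU_j$, $\zeta ^{-a_{kj}}U_kV_j=V_jU_k$) and the connection $1$-form $\omega _{(r,A,\mu ,\mathcal{U})}$, one finds that such an isomorphism can exist only if the slopes agree, $\tfrac1r A=\tfrac1{\tilde r}\tilde A$ (hence $r'=\tilde r'$), if $\mu$ and $\tilde\mu$ differ by an element of $2\pi (\mathbb{Z}^n\oplus T'^t\mathbb{Z}^n)$ up to absorbing such a lattice shift into a rescaling of the frame together with multiplying the $V_j,U_k$ by roots of unity, and if, after that shift, $\mathcal{U}$ and $\tilde{\mathcal{U}}$ are simultaneously conjugate in $U(r')$. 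The only feature of the $\mathcal{U}$-data surviving conjugation is the collection of determinants $\det V_j=e^{\mathbf{i}\xi _j}$, $\det U_k=e^{\mathbf{i}\theta _k}$; packaging them into $\xi ,\theta$, one concludes that the isomorphism class of $E_{(r,A,\mu ,\mathcal{U})}$ is determined exactly by the slope $\tfrac1r A$ together with $p(\theta )$ modulo its integral lattice and the holonomy class of $q(\xi )$. Since, by the footnote, an object $(L_{(r,A,p)},\mathcal{L}_{(r,A,p,q)})$ of $Fuk_{\rm aff}(\check T^{2n}_{J=T'})$ is determined up to isomorphism precisely by the affine subspace $L_{(r,A,p)}$ (equivalently the slope and $p$ modulo lattice) and the holonomy class of $q$, this simultaneously shows that the map descends to $\mathrm{Ob}^{isom}$ and that it is injective.

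\textbf{Step 3 (surjectivity).} Given an arbitrary object $(L_{(r,A,p)},\mathcal{L}_{(r,A,p,q)})$ of $Fuk_{\rm aff}(\check T^{2n}_{J=T'})$ (so $AT'=(AT')^t$), use \cite[Proposition 3.2]{kazushi4} to pick a nonempty $\mathcal{U}_0=\{V_j^0,U_k^0\}$ for which $E_{(r,A,\cdot ,\mathcal{U}_0)}$ is simple. The scalar twist $\mathcal{U}_0\rightsquigarrow \{c_jV_j^0,\,d_kU_k^0\}$ with $c_j,d_k\in U(1)$ visibly preserves all three commutation relations (the twisting scalars cancel) and preserves simpleness, while it multiplies $\det V_j$ by $c_j^{r'}$ and $\det U_k$ by $d_k^{r'}$; hence every prescribed $(\theta ,\xi )\in (\mathbb{R}/2\pi \mathbb{Z})^{2n}$ is realizable. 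Choosing $\theta ,\xi$ so that $p(\theta )=p$ and $q(\xi )=q$, and then setting $\mu :=p+\tfrac{r}{r'}\theta +T'^t\!\big(q-\tfrac{r}{r'}\xi \big)$, the bundle $E_{(r,A,\mu ,\mathcal{U})}$ with this twisted $\mathcal{U}$ maps to the prescribed object. Together with Step 2 this yields the bijection.

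\textbf{Main obstacle.} The substantive part is Step 2: producing a normal form for the simple projectively flat bundles $E_{(r,A,\mu ,\mathcal{U})}$ and isolating which part of the intricate $\mathcal{U}$-data is an isomorphism invariant — equivalently, in the language of Section 5, showing that the factor of automorphy of $E_{(r,A,\mu ,\mathcal{U})}$ is classified up to equivalence by the slope and by $(p(\theta ),q(\xi ))$, and carrying out the bookkeeping that relates the $U(1)$-valued determinants of $\mathcal{U}$ to the shifts in $p$ and $q$. By contrast the Lagrangian/holonomy side (Step 1 and the footnote) and the realization argument of Step 3 are comparatively routine.
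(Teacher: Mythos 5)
First, a point of comparison: the paper does not prove Theorem \ref{bijection1} at all --- it is quoted as a known result, namely \cite[Theorem 5.1]{kazushi4}, and even the paper's own analogue, Theorem \ref{bijection2}, is stated with its proof omitted as ``similar''. So there is no in-paper argument to measure your proposal against; what can be assessed is whether your sketch would actually establish the statement.

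Structurally it would: your Step 1 is exactly the content of Propositions \ref{holomorphic1} and \ref{Fukobject1} (and the shift $p\mapsto p(\theta)$, $q\mapsto q(\xi)$ indeed leaves the slope matrix $\frac{1}{r}A$ untouched), and your Step 3 is a correct realization argument, since the scalar twists $V_j\mapsto c_jV_j$, $U_k\mapsto d_kU_k$ preserve the commutation relations and simplicity while moving the determinants arbitrarily in $U(1)$. The genuine gap is Step 2, which you flag as the main obstacle but leave entirely as an assertion --- and that is where all the content of \cite[Theorem 5.1]{kazushi4} lives. Two specific things are missing. (i) You must show that an arbitrary holomorphic isomorphism $E_{(r,A,\mu,\mathcal{U})}\cong E_{(\tilde r,\tilde A,\tilde \mu,\tilde{\mathcal{U}})}$ can be normalized to a gauge transformation of controlled form (a constant matrix times an explicit exponential factor); ``writing it in local frames and comparing transition functions'' does not by itself exclude non-constant holomorphic intertwiners, and the reduction uses the projectively flat structure and simplicity of the bundles. (ii) The claim that simultaneous unitary conjugacy of the $\mathcal{U}$-data is detected exactly by the determinants $\det V_j$, $\det U_k$ is false for general $\mathcal{U}$ (inequivalent decomposable families can share all determinants); it holds here only because the category is restricted to sets $\mathcal{U}$ for which $E_{(r,A,\mu,\mathcal{U})}$ is simple, so that the matrices furnish an irreducible representation of the finite Heisenberg-type relations $\zeta^{-a_{kj}}U_kV_j=V_jU_k$, and one then needs a Stone--von Neumann-type uniqueness statement together with the bookkeeping that matches scalar twists by roots of unity against lattice shifts of $\mu$ (equivalently of $p(\theta)$ and $q(\xi)$), so that the result is compatible with the identification of Fukaya objects described in the footnote. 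Until (i) and (ii) are carried out, the map is shown neither to be well defined on isomorphism classes nor injective, so the proposal as it stands is an outline of the right strategy rather than a proof.
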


Now, we explain the homological mirror symmetry setting for $(\check{T}^{2n}_{J=T},T^{2n}_{J=T})$. By using the biholomorphic map $\varphi : T^{2n}_{J=T} \stackrel{\sim }{\rightarrow} T^{2n}_{J=T'}$ and complex vector bundles $E_{(r,A,\mu ,\mathcal{U})}\rightarrow T^{2n}_{J=T'}$, we can consider pullback bundles 
\begin{equation*}
\varphi ^*E_{(r,A,\mu ,\mathcal{U})}\rightarrow T^{2n}_{J=T}
\end{equation*}
of rank $r'$. Then, the connection $\varphi ^*\nabla_{(r,A,\mu ,\mathcal{U})}$ on $\varphi ^*E_{(r,A,\mu ,\mathcal{U})}$ is expressed locally as
\begin{equation*}
\varphi ^*\nabla_{(r,A,\mu ,\mathcal{U})}=d-\frac{\mathbf{i}}{2\pi } \left( -\frac{1}{r}y^t A^t +\frac{1}{r}\mu ^t \right) (dx+\delta dy)\cdot I_{r'}.
\end{equation*}
In particular, by Proposition \ref{holomorphic1} and biholomorphicity of the map $\varphi : T^{2n}_{J=T} \stackrel{\sim }{\rightarrow } T^{2n}_{J=T'}$, we see that the following proposition holds.
\begin{proposition} \label{holomorphic2}
For a given quadruple $(r,A,p,q)\in \mathbb{N}\times M(n;\mathbb{Z})\times \mathbb{R}^n \times \mathbb{R}^n$, the complex vector bundle $\varphi ^*E_{(r,A,\mu ,\mathcal{U})}\rightarrow T^{2n}_{J=T}$ is holomorphic if and only if $AT'=(AT')^t$ holds.
\end{proposition}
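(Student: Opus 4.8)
The plan is to obtain the statement directly from Proposition \ref{holomorphic1} by exploiting that $\varphi$ is a biholomorphism, rather than recomputing the $(0,2)$-part of the curvature on $T^{2n}_{J=T}$ from scratch.

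The first and essentially only step is the observation that pullback along a biholomorphic map preserves holomorphicity of complex vector bundles: if $g_{\alpha\beta}$ are holomorphic transition functions making $E_{(r,A,\mu,\mathcal{U})}\to T^{2n}_{J=T'}$ into a holomorphic vector bundle, then $g_{\alpha\beta}\circ\varphi$ are holomorphic and endow $\varphi^*E_{(r,A,\mu,\mathcal{U})}\to T^{2n}_{J=T}$ with a holomorphic structure; and since $\varphi^{-1}$ is again biholomorphic, the converse holds as well. Hence $\varphi^*E_{(r,A,\mu,\mathcal{U})}$ is holomorphic if and only if $E_{(r,A,\mu,\mathcal{U})}$ is. By Proposition \ref{holomorphic1} the latter is equivalent to $AT'=(AT')^t$, and the proposition follows.

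Equivalently, one may phrase this at the level of curvature: since $\Omega_{\varphi^*\nabla_{(r,A,\mu,\mathcal{U})}}=\varphi^*\Omega_{(r,A,\mu,\mathcal{U})}$ and $\varphi^*$ commutes with the $(p,q)$-type decomposition of forms (because $\varphi$ is holomorphic), one has $(\Omega_{\varphi^*\nabla_{(r,A,\mu,\mathcal{U})}})^{(0,2)}=\varphi^*(\Omega^{(0,2)}_{(r,A,\mu,\mathcal{U})})$; as $\varphi$ is a diffeomorphism, $\varphi^*$ is injective on forms, so this $(0,2)$-part vanishes exactly when $\Omega^{(0,2)}_{(r,A,\mu,\mathcal{U})}$ does, and one again invokes Proposition \ref{holomorphic1} (using the same Koszul--Malgrange-type integrability criterion employed there).

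As a consistency check one can also argue without $\varphi$ at all: the curvature of the displayed connection $\varphi^*\nabla_{(r,A,\mu,\mathcal{U})}$ equals $\frac{\mathbf{i}}{2\pi r}\,dy^tA^t(dx+\delta\,dy)\cdot I_{r'}$ (the quadratic term $\omega\wedge\omega$ vanishes because $\omega$ is $I_{r'}$ times a single $1$-form), and writing $dx,dy$ in terms of $dz=dx+T\,dy$ and $d\bar z=dx+\bar T\,dy$ shows that its $(0,2)$-part is a nonzero scalar multiple of $d\bar z^t((T-\bar T)^{-1})^tA^t(T-\delta)(T-\bar T)^{-1}d\bar z\cdot I_{r'}$, which vanishes precisely when $A^t(T-\delta)$ is symmetric; since $T'=(-T+\delta)^{-1}$, this is the same condition as $AT'=(AT')^t$. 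I do not anticipate any real obstacle: the only points needing care are the transpose-and-ordering bookkeeping in that change of coordinates and the verification that the displayed local form of $\varphi^*\nabla_{(r,A,\mu,\mathcal{U})}$ is indeed the pullback of $\nabla_{(r,A,\mu,\mathcal{U})}$ under the affine map $(x,y)\mapsto(-y,\,x+\delta y)$ — both of which the abstract argument above renders unnecessary.
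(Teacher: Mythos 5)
Your argument is correct and is essentially the paper's own: the paper also deduces Proposition \ref{holomorphic2} directly from Proposition \ref{holomorphic1} together with the biholomorphicity of $\varphi$, with no separate curvature computation. The direct check you sketch is consistent with the curvature expression appearing later in section 5, but it is not needed.
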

These holomorphic vector bundles $(\varphi ^*E_{(r,A,\mu ,\mathcal{U})},\varphi ^*\nabla_{(r,A,\mu ,\mathcal{U})})$ again form a DG-category 
\begin{equation*}
\mathcal{DG}^{\varphi }_{T^{2n}_{J=T}}. 
\end{equation*}
In particular, $DG_{T^{2n}_{J=T'}}$ is equivalent to $\mathcal{DG}^{\varphi }_{T^{2n}_{J=T}}$ as DG-categories. 

Here, we explain the nonuniqueness of the choice of a matrix $\delta $ in Lemma \ref{delta} and related discussions. We assume that $\delta _1$, $\delta _2\in M(n;\mathbb{Z})$ satisfy $\mathrm{det}(T-\delta _1)\not=0$, $\mathrm{det}(T-\delta _2)\not=0$, respectively, and set 
\begin{equation*}
T_1':=(-T+\delta _1)^{-1}, \ T_2':=(-T+\delta _2)^{-1}. 
\end{equation*}
Then, biholomorphic maps 
\begin{equation*}
\varphi _1 : T^{2n}_{J=T} \stackrel{\sim }{\rightarrow } T^{2n}_{J=T_1'}, \ \varphi _2 : T^{2n}_{J=T} \stackrel{\sim }{\rightarrow } T^{2n}_{J=T_2'}
\end{equation*}
are defined by 
\begin{equation*}
\varphi _1(z)=(-T+\delta _1)^{-1}z, \ \varphi _2(z)=(-T+\delta _2)^{-1}z,
\end{equation*} 
respectively. Furthermore, we can also define four DG-categories $DG_{T^{2n}_{J=T_1'}}$, $DG_{T^{2n}_{J=T_2'}}$, $\mathcal{DG}^{\varphi _1}_{T^{2n}_{J=T}}$ and $\mathcal{DG}^{\varphi _2}_{T^{2n}_{J=T}}$. In this situation, we can prove the following proposition.
\begin{proposition} \label{derived}
For $i=1$, $2$, if $D^b(Coh(T^{2n}_{J=T_i'}))$ is generated by $DG_{T^{2n}_{J=T_i'}}$, there exists an equivalence $Tr(\mathcal{DG}^{\varphi _1}_{T^{2n}_{J=T}})\cong Tr(\mathcal{DG}^{\varphi _2}_{T^{2n}_{J=T}})$ as triangulated categories.
\end{proposition}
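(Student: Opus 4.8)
The plan is to build the desired triangulated equivalence by routing through the bounded derived categories of coherent sheaves of the two auxiliary complex tori $T^{2n}_{J=T_1'}$ and $T^{2n}_{J=T_2'}$; these serve as a bridge precisely because they are intrinsic invariants of the underlying complex manifolds and are therefore transported by biholomorphisms. First I would use, for each $i=1,2$, the DG-equivalence $DG_{T^{2n}_{J=T_i'}}\simeq \mathcal{DG}^{\varphi _i}_{T^{2n}_{J=T}}$ induced by pullback along the biholomorphic map $\varphi _i : T^{2n}_{J=T}\stackrel{\sim}{\rightarrow} T^{2n}_{J=T_i'}$; this is the statement recorded just below Proposition \ref{holomorphic2}, now applied with $\delta $ replaced by $\delta _i$. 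Applying the Bondal-Kapranov-Kontsevich construction $Tr(-)$, which is functorial and sends a DG- (hence $A_{\infty}$-) quasi-equivalence to an equivalence of triangulated categories \cite{bondal}, \cite{Kon}, yields
\[
Tr(\mathcal{DG}^{\varphi _i}_{T^{2n}_{J=T}})\cong Tr(DG_{T^{2n}_{J=T_i'}}),\qquad i=1,2 .
\]

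Next, by hypothesis $D^b(Coh(T^{2n}_{J=T_i'}))$ is generated by $DG_{T^{2n}_{J=T_i'}}$ in the sense made precise in Section 4, i.e.\ $Tr(DG_{T^{2n}_{J=T_i'}})\cong D^b(Coh(T^{2n}_{J=T_i'}))$ as triangulated categories; combining this with the previous display gives
\[
Tr(\mathcal{DG}^{\varphi _i}_{T^{2n}_{J=T}})\cong D^b(Coh(T^{2n}_{J=T_i'})),\qquad i=1,2 .
\]
It then remains only to identify $D^b(Coh(T^{2n}_{J=T_1'}))$ with $D^b(Coh(T^{2n}_{J=T_2'}))$. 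For this I would observe that the composite $\psi := \varphi _2\circ \varphi _1^{-1} : T^{2n}_{J=T_1'}\stackrel{\sim}{\rightarrow} T^{2n}_{J=T_2'}$, given in complex coordinates by $Z\mapsto (-T+\delta _2)^{-1}(-T+\delta _1)Z$, is a biholomorphism of complex tori, being a composite of the two biholomorphisms $\varphi _1$, $\varphi _2$ of Section 4. Since any biholomorphism of complex manifolds induces an exact equivalence between the associated bounded derived categories of coherent sheaves (via pullback), we get $D^b(Coh(T^{2n}_{J=T_1'}))\cong D^b(Coh(T^{2n}_{J=T_2'}))$, and hence
\[
Tr(\mathcal{DG}^{\varphi _1}_{T^{2n}_{J=T}})\cong D^b(Coh(T^{2n}_{J=T_1'}))\cong D^b(Coh(T^{2n}_{J=T_2'}))\cong Tr(\mathcal{DG}^{\varphi _2}_{T^{2n}_{J=T}}),
\]
which is the assertion.

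I expect no substantive obstacle: the argument is essentially a bookkeeping of equivalences already at our disposal. The one point worth stressing is that the detour through $D^b(Coh)$ genuinely cannot be shortcut — there is no reason for $DG_{T^{2n}_{J=T_1'}}$ and $DG_{T^{2n}_{J=T_2'}}$ to be DG-equivalent as abstract DG-categories, since $\psi$ need not carry the distinguished generators $E_{(r,A,\mu ,\mathcal{U})}$ on one torus to those on the other; it is exactly the generation hypothesis, which identifies each $Tr(DG)$ with the manifold-intrinsic category $D^b(Coh)$, that allows $\psi$ to act at the level of derived categories. The only other technical input is the functoriality of $Tr(-)$ on quasi-equivalences, which I would invoke as a standard property of the Bondal-Kapranov-Kontsevich triangulated envelope rather than reprove here.
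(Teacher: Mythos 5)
Your proposal is correct and takes essentially the same route as the paper: identify $Tr(\mathcal{DG}^{\varphi _i}_{T^{2n}_{J=T}})\cong Tr(DG_{T^{2n}_{J=T_i'}})$ via the pullback equivalence, use the generation hypothesis to replace $Tr(DG_{T^{2n}_{J=T_i'}})$ by $D^b(Coh(T^{2n}_{J=T_i'}))$, and then invoke a biholomorphism to identify the two derived categories of coherent sheaves. The only cosmetic difference is that the paper identifies each $D^b(Coh(T^{2n}_{J=T_i'}))$ with the common category $D^b(Coh(T^{2n}_{J=T}))$ through $\varphi _i$, whereas you compose the two maps into $\psi =\varphi _2\circ \varphi _1^{-1}$ and compare the auxiliary tori directly.
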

\begin{proof}
First, for given $A_{\infty}$-categories $\mathscr{C}_1$, $\mathscr{C}_2$ which are equivalent to each other, note that there exists an equivalence $Tr(\mathscr{C}_1)\cong Tr(\mathscr{C}_2)$ as triangulated categories. Since $\mathcal{DG}^{\varphi _1}_{T^{2n}_{J=T}}\cong DG_{T^{2n}_{J=T_1'}}$, we see
\begin{equation*}
Tr(\mathcal{DG}^{\varphi _1}_{T^{2n}_{J=T}}) \cong Tr(DG_{T^{2n}_{J=T_1'}}).
\end{equation*}
Furthermore, by assumption, we have
\begin{equation*}
Tr(DG_{T^{2n}_{J=T_1'}})\cong D^b(Coh(T^{2n}_{J=T_1'})).
\end{equation*}
On the other hand, since $T^{2n}_{J=T_1'}$ is biholomorphic to $T^{2n}_{J=T}$, one obtains
\begin{equation*}
D^b(Coh(T^{2n}_{J=T_1'}))\cong D^b(Coh(T^{2n}_{J=T})).
\end{equation*}
Hence, 
\begin{equation}
Tr(\mathcal{DG}^{\varphi _1}_{T^{2n}_{J=T}})\cong D^b(Coh(T^{2n}_{J=T})) \label{derived1}
\end{equation}
holds. Similarly, we can prove
\begin{equation}
Tr(\mathcal{DG}^{\varphi _2}_{T^{2n}_{J=T}})\cong D^b(Coh(T^{2n}_{J=T})), \label{derived2}
\end{equation}
and by using two relations (\ref{derived1}), (\ref{derived2}), we see that
\begin{equation*}
Tr(\mathcal{DG}^{\varphi _1}_{T^{2n}_{J=T}})\cong Tr(\mathcal{DG}^{\varphi _2}_{T^{2n}_{J=T}})
\end{equation*}
holds.
\end{proof}

We define mirror dual objects corresponding to holomorphic vector bundles $\varphi ^*E_{(r,A,\mu ,\mathcal{U})}\rightarrow T^{2n}_{J=T}$. Let us denote the local coordinates of $\check{T}^{2n}_{J=T}$ by $(x^1,\cdots, x^n, y^1,\cdots, y^n)^t$, and we define
\begin{equation*}
\check{x}:=(x^1,\cdots, x^n)^t,\ \check{y}:=(y^1,\cdots, y^n)^t.
\end{equation*}
We consider the following $n$-dimensional submanifold $\tilde{L} _{(r,A,p)}$ in $\check{T}^{2n}_{J=T}$ :
\begin{equation*}
\tilde{L} _{(r,A,p)}:= \biggl\{ \left( \begin{array}{ccc} \check{x} \\ \check{y} \end{array} \right) \in \check{T}^{2n}_{J=T} \ | \ \check{x}=-\frac{1}{r}A\check{y}+\frac{1}{r}p \biggr\}.
\end{equation*}
By a direct calculation, we see that this $n$-dimensional submanifold $\tilde{L} _{(r,A,p)}$ satisfies the property (\ref{f1}), namely, $\tilde{L}_{(r,A,p)}$ becomes a Lagrangian submanifold in $\check{T}^{2n}_{J=T}$ if and only if $(\mathrm{Im}(T-\delta ))^t (-A)=((\mathrm{Im}(T-\delta ))^t (-A))^t$, i.e., $(\mathrm{Im}T)^t A=((\mathrm{Im}T)^t A)^t$ holds. We then consider the trivial complex line bundle $\tilde{\mathcal{L}} _{(r,A,p,q)}\rightarrow \tilde{L} _{(r,A,p)}$ with the flat connection
\begin{equation*}
\nabla_{\tilde{\mathcal{L}}_{(r,A,p,q)}}:=d+\frac{\mathbf{i}}{2\pi}\frac{1}{r}q^t d\check{y}.
\end{equation*}
Note that $q\in \mathbb{R}^n$ is the unitary holonomy of $\tilde{\mathcal{L}}_{(r,A,p,q)}$ along $\tilde{L} _{(r,A,p)}\approx T^n$. Also in this case, we discuss the property (\ref{f2}) for this pair $(\tilde{L}_{(r,A,p)}, \tilde{\mathcal{L}}_{(r,A,p,q)})$ : 
\begin{equation*}
\Omega _{\tilde{\mathcal{L}} _{(r,A,p,q)}}=d\check{x}^t \mathrm{Re}(T-\delta ) d\check{y} \mid _{\tilde{L} _{(r,A,p)}}.
\end{equation*}
Here, $\Omega _{\tilde{\mathcal{L}} _{(r,A,p,q)}}$ is the curvature form of the flat connection $\nabla_{\tilde{\mathcal{L}}_{(r,A,p,q)}}$ on $\tilde{\mathcal{L}}_{(r,A,p,q)}$, i.e., $\Omega _{\tilde{\mathcal{L}}_{(r,A,p,q)}}=0$. Hence, we see 
\begin{equation*}
d\check{x}^t \mathrm{Re}(T-\delta ) d\check{y}\mid _{\tilde{L} _{(r,A,p)}}=-\frac{1}{r}d\check{y}^t A^t \mathrm{Re}(T-\delta ) d\check{y} =0,
\end{equation*}
so one has $A^t \mathrm{Re}(T-\delta )=(A^t \mathrm{Re}(T-\delta ))^t$. Note that $(\mathrm{Im}T)^t A=((\mathrm{Im}T)^t A)^t$ and $A^t \mathrm{Re}(T-\delta )=(A^t \mathrm{Re}(T-\delta ))^t$ hold if and only if $AT'=(AT')^t$ holds. Thus, we obtain the following proposition.
\begin{proposition} \label{Fukobject2}
For a given quadruple $(r,A,p,q)\in \mathbb{N} \times M(n;\mathbb{Z}) \times \mathbb{R}^n \times \mathbb{R}^n$, $(\tilde{L}_{(r,A,p)},\tilde{\mathcal{L}}_{(r,A,p,q)})$ gives an object of the Fukaya category on $\check{T}^{2n}_{J=T}$ if and only if $AT'=(AT')^t$ holds.
\end{proposition}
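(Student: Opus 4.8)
The plan is to verify the two defining conditions (\ref{f1}) and (\ref{f2}) for the pair $(\tilde{L}_{(r,A,p)},\tilde{\mathcal{L}}_{(r,A,p,q)})$ directly, and then reduce each of them to the single condition $AT'=(AT')^t$ using the explicit relation $T'=(-T+\delta)^{-1}$. The computation leading up to the statement already does most of the work, so the proof is mainly a matter of assembling those pieces and checking the final algebraic equivalence.

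First I would treat condition (\ref{f1}). One computes the restriction of the symplectic form $\mathrm{Im}(T-\delta)$ of $\check{T}^{2n}_{J=T}$ to $\tilde{L}_{(r,A,p)}$; since $\tilde{L}_{(r,A,p)}$ is cut out by $\check{x}=-\tfrac{1}{r}A\check{y}+\tfrac{1}{r}p$, we substitute $d\check{x}=-\tfrac{1}{r}A\,d\check{y}$ and obtain, up to the factor $-\tfrac{1}{r}$, the quadratic expression $d\check{y}^t A^t\,\mathrm{Im}(T-\delta)\,d\check{y}$. This vanishes precisely when $A^t\mathrm{Im}(T-\delta)$ is symmetric; since $\delta$ is real, $\mathrm{Im}(T-\delta)=\mathrm{Im}T$, so the Lagrangian condition is $(\mathrm{Im}T)^t A=((\mathrm{Im}T)^t A)^t$, exactly as asserted in the paragraph preceding the proposition.

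Next I would treat condition (\ref{f2}). The connection $\nabla_{\tilde{\mathcal{L}}_{(r,A,p,q)}}=d+\tfrac{\mathbf{i}}{2\pi}\tfrac{1}{r}q^t d\check{y}$ is flat, so its curvature $\Omega_{\tilde{\mathcal{L}}_{(r,A,p,q)}}$ is zero; condition (\ref{f2}) therefore demands that the $B$-field of $\check{T}^{2n}_{J=T}$, namely $\mathrm{Re}(T-\delta)$, restrict to zero on $\tilde{L}_{(r,A,p)}$. Restricting $d\check{x}^t\mathrm{Re}(T-\delta)\,d\check{y}$ via $d\check{x}=-\tfrac{1}{r}A\,d\check{y}$ yields $-\tfrac{1}{r}d\check{y}^t A^t\mathrm{Re}(T-\delta)\,d\check{y}$, which vanishes iff $A^t\mathrm{Re}(T-\delta)$ is symmetric, i.e. $A^t\mathrm{Re}(T-\delta)=(A^t\mathrm{Re}(T-\delta))^t$.

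Finally I would show that the conjunction of these two symmetry conditions is equivalent to $AT'=(AT')^t$. The cleanest route is to write $T-\delta=\mathrm{Re}(T-\delta)+\mathbf{i}\,\mathrm{Im}(T-\delta)$, observe that $A^t(T-\delta)$ symmetric $\iff$ both its real and imaginary parts are symmetric $\iff$ both displayed conditions hold, and then transport symmetry of $A^t(T-\delta)$ through the inversion $T'=(-T+\delta)^{-1}=-(T-\delta)^{-1}$: a matrix $M$ satisfies $A^t M=(A^t M)^t=M^t A$ if and only if $A M^{-1}$ is symmetric (multiply $A^t M=M^t A$ on both sides by $M^{-1}$ and its transpose, using that $M$ is non-singular by the choice of $\delta$), hence $A^t(T-\delta)$ symmetric $\iff$ $AT'$ symmetric. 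This last bookkeeping with transposes and the inversion is the only place any care is needed; everything else is the substitution already carried out in the text. I expect no real obstacle, only the routine verification that the algebraic equivalence $A^t(T-\delta)=(A^t(T-\delta))^t \iff AT'=(AT')^t$ goes through cleanly given $\det(T-\delta)\neq0$.
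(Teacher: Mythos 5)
Your proposal is correct and follows essentially the same route as the paper: the paper's argument is precisely the computation preceding the proposition, reducing (\ref{f1}) to symmetry of $(\mathrm{Im}T)^t A$ and (\ref{f2}) (via flatness of $\nabla_{\tilde{\mathcal{L}}_{(r,A,p,q)}}$) to symmetry of $A^t\mathrm{Re}(T-\delta)$, then asserting the equivalence with $AT'=(AT')^t$. Your only addition is to spell out that last equivalence via $A^tM=M^tA \iff (AM^{-1})^t=AM^{-1}$ with $M=T-\delta$ non-singular, a detail the paper states without proof, and that verification is correct.
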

Hereafter, we denote the full subcategory of the Fukaya category on $\check{T}^{2n}_{J=T}$ consisting of objects $(\tilde{L}_{(r,A,p)},\tilde{\mathcal{L}}_{(r,A,p,q)})$ which satisfy the condition $AT'=(AT')^t$ by 
\begin{equation*}
Fuk_{\rm aff}(\check{T}^{2n}_{J=T}). 
\end{equation*}

As expected from the statements of Proposition \ref{holomorphic2} and Proposition \ref{Fukobject2}, for two $A_{\infty}$-categories $\mathcal{DG}_{T^{2n}_{J=T}}^{\varphi}$ and $Fuk_{\rm aff}(\check{T}^{2n}_{J=T})$, we can obtain the following analogue result of Theorem \ref{bijection1}. Here, we omit the proof of the following theorem because it is proved in a similar way as in the proof of Theorem \ref{bijection1}.
\begin{theo}\label{bijection2}
A map $\mathrm{Ob}(\mathcal{DG}_{T^{2n}_{J=T}}^{\varphi}) \rightarrow \mathrm{Ob}(Fuk_{\rm aff}(\check{T}^{2n}_{J=T}))$ is defined by
\begin{equation*}
\varphi^*E_{(r,A,\mu, \mathcal{U})} \mapsto (\tilde{L}_{(r,A,p(\theta))}, \tilde{\mathcal{L}}_{(r,A,p(\theta),q(\xi))}),
\end{equation*}
and it induces a bijection between $\mathrm{Ob}^{isom}(\mathcal{DG}_{T^{2n}_{J=T}}^{\varphi})$ and $\mathrm{Ob}^{isom}(Fuk_{\rm aff}(\check{T}^{2n}_{J=T}))$, where $\mathrm{Ob}^{isom}(\mathcal{DG}_{T^{2n}_{J=T}}^{\varphi})$ and $\mathrm{Ob}^{isom}(Fuk_{\rm aff}(\check{T}^{2n}_{J=T}))$ denote the set of the isomorphism classes of objects of $\mathcal{DG}_{T^{2n}_{J=T}}^{\varphi}$ and the set of the isomorphism classes of objects of $Fuk_{\rm aff}(\check{T}^{2n}_{J=T})$, respectively.
\end{theo}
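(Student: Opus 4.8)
The plan is to transport the bijection of Theorem \ref{bijection1} across the biholomorphism $\varphi$, so that essentially nothing new needs to be proved. First I would recall that $\varphi : T^{2n}_{J=T} \xrightarrow{\sim} T^{2n}_{J=T'}$ induces, by construction of the DG-categories, an equivalence $\varphi^* : DG_{T^{2n}_{J=T'}} \to \mathcal{DG}^{\varphi}_{T^{2n}_{J=T}}$ sending $E_{(r,A,\mu,\mathcal{U})}$ to $\varphi^*E_{(r,A,\mu,\mathcal{U})}$; in particular $\varphi^*$ gives a bijection on isomorphism classes of objects, $\mathrm{Ob}^{isom}(DG_{T^{2n}_{J=T'}}) \xrightarrow{\sim} \mathrm{Ob}^{isom}(\mathcal{DG}^{\varphi}_{T^{2n}_{J=T}})$. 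On the symplectic side I would exhibit a corresponding diffeomorphism $\check\varphi : \check{T}^{2n}_{J=T} \to \check{T}^{2n}_{J=T'}$ identifying the complexified symplectic forms $\tilde\omega = T-\delta$ and $\tilde\omega = -(T'^{-1})^t$ (recall $T' = (-T+\delta)^{-1}$, so $-(T'^{-1})^t = (-T+\delta)^t \cdot(-1)^{?}$ — precisely, the B-field transform by $\mathcal{D}$ in section 3 was set up exactly so these two complexified symplectic tori are isomorphic), and check that $\check\varphi$ carries $\tilde L_{(r,A,p)}$ to $L_{(r,A,p)}$ and $\tilde{\mathcal{L}}_{(r,A,p,q)}$ to $\mathcal{L}_{(r,A,p,q)}$ as pairs $(L,\mathcal{L})$. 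This yields a bijection $\mathrm{Ob}^{isom}(Fuk_{\rm aff}(\check{T}^{2n}_{J=T'})) \xrightarrow{\sim} \mathrm{Ob}^{isom}(Fuk_{\rm aff}(\check{T}^{2n}_{J=T}))$ matching the labels $(r,A,p,q)$.

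Next I would assemble the square: Theorem \ref{bijection1} gives the bottom bijection $\mathrm{Ob}^{isom}(DG_{T^{2n}_{J=T'}}) \leftrightarrow \mathrm{Ob}^{isom}(Fuk_{\rm aff}(\check{T}^{2n}_{J=T'}))$ sending $E_{(r,A,\mu,\mathcal{U})} \mapsto (L_{(r,A,p(\theta))}, \mathcal{L}_{(r,A,p(\theta),q(\xi))})$, the vertical maps $\varphi^*$ and $\check\varphi_*$ are bijections, and commutativity of the square is exactly the statement that $\varphi^*E_{(r,A,\mu,\mathcal{U})} \mapsto (\tilde L_{(r,A,p(\theta))}, \tilde{\mathcal{L}}_{(r,A,p(\theta),q(\xi))})$. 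Composing, the top map is a bijection, which is the assertion of Theorem \ref{bijection2}. The only arithmetic to verify is that $\check\varphi$ preserves the data $(r,A,p,q)$ unchanged — in particular that the multi-section slope $A/r$, the affine shift $p/r$, and the holonomy $q/r$ are not reshuffled — so that the shifts $p(\theta) = p - \tfrac{r}{r'}\theta$, $q(\xi) = q + \tfrac{r}{r'}\xi$ produced on the $T'$-side pass through verbatim.

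Concretely, for the diffeomorphism $\check\varphi$ I would read off its linear part from the same recipe used for $\varphi$ itself: on $T^{2n}_{J=T}$ viewed as $\mathbb{R}^{2n}/2\pi\mathbb{Z}^{2n}$, the map $\varphi$ acts by $\left(\begin{array}{cc} O & -I_n \\ I_n & \delta \end{array}\right)$ on $(x,y)^t$; the mirror (T-dual) map $\check\varphi$ on the symplectic side acts by the transpose-inverse type partner of this, namely the block matrix obtained from $\left(\begin{array}{cc} O & -I_n \\ I_n & \delta \end{array}\right)$ by the $O(2n,2n;\mathbb{Z})$-conjugation implicit in Definition \ref{mirror1} (equivalently, on $(\check x, \check y)^t$ by $\left(\begin{array}{cc} \delta^t & I_n \\ -I_n & O \end{array}\right)$ or its inverse — the correct sign is pinned down by demanding that $d\check x^t \mathrm{Re}(T-\delta)d\check y + \mathbf{i}\, d\check x^t \mathrm{Im}(T-\delta) d\check y$ pull back to $d\check X^t \mathrm{Re}(-(T'^{-1})^t) d\check Y + \mathbf{i}\, d\check X^t \mathrm{Im}(-(T'^{-1})^t) d\check Y$). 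Then $\tilde L_{(r,A,p)} = \{\check x = -\tfrac1r A\check y + \tfrac1r p\}$ is carried to $\{\check Y = \tfrac1r A\check X + \tfrac1r p\} = L_{(r,A,p)}$ by a short computation, and $\nabla_{\tilde{\mathcal{L}}_{(r,A,p,q)}} = d + \tfrac{\mathbf{i}}{2\pi}\tfrac1r q^t d\check y$ is carried to $\nabla_{\mathcal{L}_{(r,A,p,q)}} = d - \tfrac{\mathbf{i}}{2\pi}\tfrac1r q^t d\check X$ (the sign flip absorbed in the coordinate change), so the holonomy $q$ is preserved.

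The main obstacle I expect is bookkeeping rather than conceptual: getting the signs and transposes in the mirror diffeomorphism $\check\varphi$ exactly right so that (i) the complexified symplectic forms match on the nose, (ii) the Lagrangian $\tilde L_{(r,A,p)}$ maps precisely to $L_{(r,A,p)}$ with the \emph{same} $(r,A,p)$ and not to some $SL(2n;\mathbb{Z})$-transformed label, and (iii) the flat line bundle's holonomy transports without a twist. Because $\delta$ has integer entries these transformations are genuinely diffeomorphisms of the tori (not merely of the universal covers), which must be checked. Once $\check\varphi$ is correctly identified, everything else is formal transport of structure, and one may legitimately say, as the paper does, that the proof runs parallel to that of Theorem \ref{bijection1}.
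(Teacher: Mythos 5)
Your proposal is correct in substance, but it takes a different route from the paper: the paper simply omits the proof, asserting that one repeats the argument of Theorem \ref{bijection1} (i.e.\ of \cite[Theorem 5.1]{kazushi4}) verbatim for the pair $(\check{T}^{2n}_{J=T},T^{2n}_{J=T})$, whereas you transport the known bijection across the square formed by $\varphi^*$ and a symplectomorphism $\check{\varphi}$. Your route has the virtue of making the paper's concluding diagram literally commute and of reducing the new statement to pure bookkeeping, while the paper's route avoids having to exhibit $\check{\varphi}$ at all. One correction to your bookkeeping: the candidate linear part $\left(\begin{smallmatrix}\delta^t & I_n\\ -I_n & O\end{smallmatrix}\right)$ does not in general pull $d\check{X}^t(-(T'^{-1})^t)d\check{Y}$ back to $d\check{x}^t(T-\delta)d\check{y}$, since it produces an extra term $-d\check{x}^t\,\delta(T-\delta)^t\,d\check{x}$, which vanishes only if $\delta T^t$ is symmetric. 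The correct identification is simpler and $\delta$-independent: using $-(T'^{-1})^t=(T-\delta)^t$, the map $\check{\varphi}(\check{x},\check{y})=(\check{X},\check{Y})=(-\check{y},\check{x})$ is an integral linear diffeomorphism of $\mathbb{R}^{2n}/2\pi\mathbb{Z}^{2n}$ with $\check{\varphi}^*\bigl(d\check{X}^t(T-\delta)^t d\check{Y}\bigr)=d\check{x}^t(T-\delta)d\check{y}$; it carries $\tilde{L}_{(r,A,p)}=\{\check{x}=-\tfrac1r A\check{y}+\tfrac1r p\}$ onto $L_{(r,A,p)}=\{\check{Y}=\tfrac1r A\check{X}+\tfrac1r p\}$ with the same label, and pulls $d-\tfrac{\mathbf{i}}{2\pi}\tfrac1r q^t d\check{X}$ back to $d+\tfrac{\mathbf{i}}{2\pi}\tfrac1r q^t d\check{y}$, so the holonomy $q$ is untouched; moreover the object conditions of Propositions \ref{Fukobject1} and \ref{Fukobject2} are both $AT'=(AT')^t$, so $\check{\varphi}$ restricts to a label-preserving bijection on objects compatible with the notion of isomorphism in the footnote. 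With this $\check{\varphi}$ in hand, your assembly of the square with Theorem \ref{bijection1} and the equivalence $\varphi^*$ does prove Theorem \ref{bijection2}.
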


By summarizing the above discussions, we obtain the following diagrams. The left hand side of the following diagrams is the our construction which is proposed in sections 3, 4, and the right hand side of the following diagrams is based on the SYZ construction.
\begin{equation*}
\begin{CD}
\varphi ^*E_{(r,A,\mu ,\mathcal{U})}                                                                      @>>>            E_{(r,A,\mu ,\mathcal{U})} \\
@VVV                                                                                                                                  @VVV \\
T^{2n}_{J=T}=\mathbb{C}^n/2\pi (\mathbb{Z}^n\oplus T\mathbb{Z}^n)            @>\sim >\varphi >      T^{2n}_{J=T'}=\mathbb{C}^n/2\pi (\mathbb{Z}^n\oplus T'\mathbb{Z}^n) \\
@V\mathrm{mirror\ dual}VV                                                                                                    @VV\mathrm{Definition} \ \ref{mirror1}V \\
\check{T}^{2n}_{J=T}=(T^{2n},\tilde{\omega }=d\check{x}^t (T-\delta ) d\check{y})          @.             \check{T}^{2n}_{J=T'}=(T^{2n},\tilde{\omega }=d\check{X}^t(-(T'^{-1})^t)d\check{Y}), \vspace{5mm} \\
\varphi ^*E_{(r,A,\mu ,\mathcal{U})}                                                            @>AT'=(AT')^t>>       E_{(r,A,\mu ,\mathcal{U})} \\
@V\mathrm{mirror\ dual}VAT'=(AT')^tV                                                                                    @VAT'=(AT')^tV\mathrm{mirror\ dual}V \\
(\tilde{L}_{(r,A,p(\theta))},\tilde{\mathcal{L}}_{(r,A,p(\theta),q(\xi))})                                        @.             (L_{(r,A,p(\theta))},\mathcal{L}_{(r,A,p(\theta),q(\xi))}).
\end{CD}
\end{equation*}

\section{On the class of holomorphic vector bundles $\varphi ^*E_{(r,A,\mu ,\mathcal{U})}^{T'}$}
Throughout this section, we denote $E_{(r,A,\mu,\mathcal{U})}\rightarrow T^{2n}_{J=T'}$ by
\begin{equation*}
E_{(r,A,\mu,\mathcal{U})}^{T'}\rightarrow T^{2n}_{J=T'}
\end{equation*}
in order to specify that the definition of $E_{(r,A,\mu,\mathcal{U})}$ depends on the complex structure $T'$ of $T^{2n}_{J=T'}$. In this section, we investigate holomorphic vector bundles $\varphi^*E_{(r,A,\mu,\mathcal{U})}^{T'}\rightarrow T^{2n}_{J=T}$. These holomorphic vector bundles $\varphi ^*E_{(r,A,\mu ,\mathcal{U})}^{T'}$ are examples of projectively flat bundles, and in general, the factors of automorphy of projectively flat bundles on complex tori are classified concretely \cite{Hano}, \cite{matsu}, \cite{koba}, \cite{yang}. Hence, by using this classification result, we first interpret holomorphic vector bundles $\varphi ^*E_{(r,A,\mu ,\mathcal{U})}^{T'}$ in the language of factors of automorphy (Theorem \ref{maintheorem}). On the other hand, as an analogue of holomorphic vector bundles $E_{(r,A,\mu,\mathcal{U})}^{T'}$, we can also consider holomorphic vector bundles 
\begin{equation*}
E_{(r,A,\mu,\mathcal{U})}^T\rightarrow T^{2n}_{J=T}
\end{equation*}
by regarding $T'$ as $T$. Thus, finally, we check that the set consisting of projectively flat bundles $\varphi ^*E_{(r,A,\mu ,\mathcal{U})}^{T'}$ differs from the set consisting of holomorphic vector bundles $E_{(r,A,\mu,\mathcal{U})}^T$, and consider the meaning of this result.

Let us define $L:=2\pi (\mathbb{Z}^n\oplus T\mathbb{Z}^n)$, i.e., $T^{2n}_{J=T}=\mathbb{C}^n/L$. Then, this lattice $L$ in $\mathbb{C}^n$ is generated by
\begin{gather*}
\gamma _1:=(2\pi ,0,\cdots, 0)^t, \cdots, \gamma _n:=(0,\cdots, 0, 2\pi )^t, \\
\gamma '_1:=(2\pi t_{11},\cdots, 2\pi t_{n1})^t, \cdots, \gamma '_n:=(2\pi t_{1n},\cdots, 2\pi t_{nn})^t,
\end{gather*}
where $t_{ij}$ denotes the $(i,j)$ component of $T$.

Now, we discuss the relations between holomorphic vector bundles $\varphi ^*E_{(r,A,\mu ,\mathcal{U})}^{T'}$ and projectively flat bundles. By a direct calculation, we see that the curvature form $\varphi ^*\Omega _{(r,A,\mu ,\mathcal{U})}$ of a holomorphic vector bundle $\varphi ^*E_{(r,A,\mu ,\mathcal{U})}^{T'}$ is expressed locally as 
\begin{equation}
\varphi ^*\Omega _{(r,A,\mu ,\mathcal{U})}=\frac{\mathbf{i}}{2\pi r'}\frac{r'}{r}dz^t \{ (T-\bar{T})^{-1} \}^t A^t d\bar{z}\cdot I_{r'}. \label{curvature}
\end{equation}
In general, for a compact K\"{a}hler manifold $M$ and a holomorphic vector bundle $E\rightarrow M$ of rank $r$ with a curvature form $\Omega _E$, $E$ is projectively flat if and only if there exists a complex 2-form $\alpha $ on $M$ such that $\Omega _E=\alpha \cdot I_r$ (see \cite{matsu}, \cite{koba}, \cite{yang}). Therefore, by the local expression (\ref{curvature}), it is clear that $\varphi ^*E_{(r,A,\mu ,\mathcal{U})}^{T'}$ is projectively flat. Here, we set
\begin{equation*}
R:=\frac{\mathbf{i}}{2\pi }\frac{r'}{r}\{ (T-\bar{T})^{-1} \}^t A^t,
\end{equation*}
namely,
\begin{equation*}
\varphi ^*\Omega _{(r,A,\mu ,\mathcal{U})}=\frac{1}{r'}dz^t R d\bar{z}\cdot I_{r'}.
\end{equation*}
Then, the following lemma holds.
\begin{lemma}
The matrix $R$ is a real symmetric matrix of order $n$.
\end{lemma}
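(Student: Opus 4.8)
The plan is to verify the two properties — reality and symmetry — of $R=\frac{\mathbf{i}}{2\pi}\frac{r'}{r}\{(T-\bar{T})^{-1}\}^t A^t$ directly from the defining data. First I would record that $T-\bar{T}=2\mathbf{i}\,\mathrm{Im}T$, so that $(T-\bar{T})^{-1}=\frac{1}{2\mathbf{i}}(\mathrm{Im}T)^{-1}$ and hence
\begin{equation*}
R=\frac{\mathbf{i}}{2\pi}\frac{r'}{r}\cdot\frac{1}{2\mathbf{i}}\left((\mathrm{Im}T)^{-1}\right)^t A^t=\frac{1}{4\pi}\frac{r'}{r}\left((\mathrm{Im}T)^{-1}\right)^t A^t.
\end{equation*}
Since $\mathrm{Im}T$ is a real (positive definite, hence invertible and symmetric) matrix and $A\in M(n;\mathbb{Z})$, the matrix $\left((\mathrm{Im}T)^{-1}\right)^t A^t=(\mathrm{Im}T)^{-1}A^t$ is real, and $r,r'\in\mathbb{N}$, so $R$ is a real $n\times n$ matrix. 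This disposes of the reality claim with essentially no computation.

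For symmetry, I would use the holomorphicity hypothesis built into the category: the bundles $\varphi^*E_{(r,A,\mu,\mathcal{U})}^{T'}$ are holomorphic, which by Proposition \ref{holomorphic2} forces $AT'=(AT')^t$, and recalling $T'=(-T+\delta)^{-1}$ this is equivalent, after the computations already carried out in section 4, to $(\mathrm{Im}T)^t A=((\mathrm{Im}T)^t A)^t$, i.e. $(\mathrm{Im}T)A$ is symmetric (using that $\mathrm{Im}T$ is symmetric, $(\mathrm{Im}T)A=A^t\mathrm{Im}T$). Transposing $R$ up to the positive real scalar $\frac{1}{4\pi}\frac{r'}{r}$, I get $R^t=A(\mathrm{Im}T)^{-1}$, while $R=(\mathrm{Im}T)^{-1}A^t$; so $R=R^t$ is equivalent to $(\mathrm{Im}T)^{-1}A^t=A(\mathrm{Im}T)^{-1}$, and multiplying both sides by $\mathrm{Im}T$ on left and right turns this into $A^t\mathrm{Im}T=\mathrm{Im}T\,A$, which is exactly the symmetry of $(\mathrm{Im}T)A$ just obtained. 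Hence $R=R^t$.

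The only genuine subtlety — and the step I would be most careful about — is confirming the translation between the stated holomorphicity condition $AT'=(AT')^t$ and the real condition $(\mathrm{Im}T)A=A^t(\mathrm{Im}T)$; this is not a new computation, since it was already performed in the discussion preceding Proposition \ref{Fukobject2} in section 4 (where it is noted that $(\mathrm{Im}T)^tA=((\mathrm{Im}T)^tA)^t$ together with the $\mathrm{Re}$-condition is equivalent to $AT'=(AT')^t$), so I would simply cite that equivalence rather than rederive it. Alternatively, one can avoid invoking holomorphicity of the bundle at all and instead observe that $R$ being symmetric is, via the local expression \eqref{curvature} of the curvature, equivalent to the $(0,2)$-part of $\varphi^*\Omega_{(r,A,\mu,\mathcal{U})}$ vanishing, which is the holomorphicity of $\varphi^*E_{(r,A,\mu,\mathcal{U})}^{T'}$ — a cleaner route that makes the lemma almost a restatement of Proposition \ref{holomorphic2}. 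I would present the direct matrix computation as the main argument and mention this conceptual reformulation as a remark.
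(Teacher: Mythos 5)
Your proof is correct and, like the paper's, rests on the standing assumption $AT'=(AT')^t$, but you organize the symmetry argument along a slightly different route. The paper proves symmetry first, staying with complex matrices: it writes $T-\bar{T}=(T-\delta)-(\bar{T}-\delta)$ and expresses $R$ as a difference of two matrices of the form $\{(T-\bar{T})^{-1}\}^t A^t(T-\delta)(T-\bar{T})^{-1}$, each symmetric because $A^t(T-\delta)=(T-\delta)^t A$; reality is then read off afterwards from $R=\frac{1}{4\pi}\frac{r'}{r}(T_I^{-1})^t A^t$, where $T_I:=\mathrm{Im}T$. You instead pass to that real expression at the outset and extract both claims from it, using the imaginary part of $(T-\delta)^t A=A^t(T-\delta)$, namely $T_I^t A=A^t T_I$; this is a bit more economical and, as you note, makes the lemma essentially a restatement of the vanishing of the $(0,2)$-part of the curvature in Proposition \ref{holomorphic2}. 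Two small points should be repaired in the write-up, neither of which is a real gap. First, the paper never assumes $\mathrm{Im}T$ symmetric (positive definiteness of the associated quadratic form does not force symmetry), so you should not replace $((\mathrm{Im}T)^{-1})^t$ by $(\mathrm{Im}T)^{-1}$; this assumption is also unnecessary, since $R=R^t$ is directly equivalent to $A^t T_I=T_I^t A$, which is exactly the imaginary part of the holomorphicity relation, and reality of $R$ needs only that $T_I$ and $A$ are real. Second, $AT'=(AT')^t$ is not equivalent to the imaginary-part condition alone: section 4 shows it is equivalent to that condition together with the symmetry of $A^t\mathrm{Re}(T-\delta)$; you only use the implication from $AT'=(AT')^t$ to $T_I^t A=A^t T_I$, which does hold, so the logic survives, but the word ``equivalent'' should be weakened.
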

\begin{proof}
By a direct calculation, 
\begin{align*}
R&=\frac{\mathbf{i}}{2\pi }\frac{r'}{r}\{ (T-\bar{T})^{-1} \}^t A^t (T-\bar{T})(T-\bar{T})^{-1} \\
&=\frac{\mathbf{i}}{2\pi }\frac{r'}{r}\{ (T-\bar{T})^{-1} \}^t A^t (T-\delta )(T-\bar{T})^{-1}-\frac{\mathbf{i}}{2\pi }\frac{r'}{r}\{ (T-\bar{T})^{-1} \}^t A^t (\bar{T}-\delta )(T-\bar{T})^{-1},
\end{align*}
and since $AT'=(AT')^t$, i.e., $(T-\delta )^t A=A^t (T-\delta )$, it is clear that the two matrices
\begin{equation*}
\frac{\mathbf{i}}{2\pi }\frac{r'}{r}\{ (T-\bar{T})^{-1} \}^t A^t (T-\delta )(T-\bar{T})^{-1},\ \frac{\mathbf{i}}{2\pi }\frac{r'}{r}\{ (T-\bar{T})^{-1} \}^t A^t (\bar{T}-\delta )(T-\bar{T})^{-1}
\end{equation*}
are symmetric. Hence, $R$ is a symmetric matrix. Furthermore, when we denote $T=T_R+\mathbf{i}T_I$ with $T_R:=\mathrm{Re}T$, $T_I:=\mathrm{Im}T$, one has
\begin{equation*}
R=\frac{1}{4\pi }\frac{r'}{r}(T_I^{-1})^t A^t.
\end{equation*}
This relation indicates $R\in M(n;\mathbb{R})$.
\end{proof}
\begin{rem}
Although the matrix $R$ is defined by using the matrix $\frac{r'}{r}A$, each component of the matrix $\frac{r'}{r}A$ is an integer.
\end{rem}
By using this real symmetric matrix $R=(R_{ij})$ of order $n$, we define an Hermitian bilinear form $\mathcal{R} : \mathbb{C}^n\times \mathbb{C}^n\rightarrow \mathbb{C}$ by
\begin{equation*}
\mathcal{R}(z,w):=\sum_{i,j=1}^{n} R_{ij} z_i \bar{w_j},
\end{equation*}
where $z=(z_1,\cdots,z_n)^t$, $w=(w_1,\cdots,w_n)^t$. Then, the following propositions hold.
\begin{proposition}
For $\gamma _1,\cdots, \gamma _n$, $\mathrm{Im}\mathcal{R}(\gamma _j,\gamma _k)=0$, where $j,k=1,\cdots,n$.
\end{proposition}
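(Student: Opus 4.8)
The plan is to compute $\mathcal{R}(\gamma_j, \gamma_k)$ directly from the definitions and read off that it is real. Recall $\gamma_j = 2\pi e_j$ (the $j$-th standard basis vector scaled by $2\pi$), so $\mathcal{R}(\gamma_j, \gamma_k) = \sum_{a,b} R_{ab} (\gamma_j)_a \overline{(\gamma_k)_b} = (2\pi)^2 R_{jk}$. By the preceding lemma, $R$ is a \emph{real} symmetric matrix, hence $R_{jk} \in \mathbb{R}$ and therefore $\mathcal{R}(\gamma_j, \gamma_k) = (2\pi)^2 R_{jk}$ is real, i.e. $\mathrm{Im}\,\mathcal{R}(\gamma_j,\gamma_k) = 0$.

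Concretely, the steps I would carry out are: first, substitute the explicit generators $\gamma_j = (0,\dots,0,2\pi,0,\dots,0)^t$ (with $2\pi$ in the $j$-th slot) into the defining formula $\mathcal{R}(z,w) = \sum_{i,j} R_{ij} z_i \bar{w}_j$; second, observe that only the single term with $i=j$ (in the first argument) and the single term matching the $k$-th slot of $\gamma_k$ survives, yielding $(2\pi)^2 R_{jk}$; third, invoke the lemma above — which already established $R = \tfrac{1}{4\pi}\tfrac{r'}{r}(T_I^{-1})^t A^t \in M(n;\mathbb{R})$ — to conclude $R_{jk}$ is real; fourth, conclude $\mathrm{Im}\,\mathcal{R}(\gamma_j,\gamma_k) = \mathrm{Im}\big((2\pi)^2 R_{jk}\big) = 0$.

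There is essentially no obstacle here: this proposition is a direct unwinding of definitions, and the only input beyond arithmetic is the reality of $R$, which is exactly the content of the lemma immediately preceding the statement. The symmetry of $R$ is not even needed for this particular proposition (it would be relevant for a companion statement about $\mathcal{R}(\gamma_j, \gamma_k) = \mathcal{R}(\gamma_k, \gamma_j)$ or about $\mathrm{Im}\,\mathcal{R}(\gamma'_j, \gamma'_k)$), so the proof is a short two-line computation. If anything warrants a word of care, it is simply making the index bookkeeping in the double sum explicit so the reader sees why the cross terms vanish, but that is routine.
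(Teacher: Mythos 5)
Your proposal is correct and coincides with the paper's own proof: both substitute the generators $\gamma_j=2\pi e_j$ into the definition to get $\mathcal{R}(\gamma_j,\gamma_k)=4\pi^2 R_{jk}$ and then invoke the preceding lemma that $R$ is real. Your additional remark that the symmetry of $R$ is not needed here is accurate, but there is no substantive difference from the paper's argument.
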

\begin{proof}
By definition, $\mathcal{R}(\gamma _j,\gamma _k)=4\pi ^2 R_{jk}$, where $R_{jk}\in \mathbb{R}$, so $\mathrm{Im}\mathcal{R}(\gamma _j,\gamma _k)=0$.
\end{proof}
\begin{proposition}
For $\gamma '_1,\cdots, \gamma '_n$, $\mathrm{Im}\mathcal{R}(\gamma '_j,\gamma '_k)=(\pi \frac{r'}{r}(A^t \delta -\delta ^t A))_{jk}$, where $j,k=1,\cdots,n$.
\end{proposition}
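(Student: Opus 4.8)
The plan is to turn the claim into a single matrix identity and then read off the imaginary part, using the holomorphicity constraint $AT'=(AT')^t$. First, note that $\gamma'_j$ is exactly $2\pi$ times the $j$-th column of $T$, so directly from the definition of $\mathcal{R}$ one gets
\begin{equation*}
\mathcal{R}(\gamma'_j,\gamma'_k)=4\pi^2\,(T^t R\bar{T})_{jk}.
\end{equation*}
Substituting the closed form $R=\frac{1}{4\pi}\frac{r'}{r}(T_I^{-1})^t A^t$ obtained in the previous lemma (here $T_I:=\mathrm{Im}T$, and $(T_I^{-1})^t=(T_I^t)^{-1}$), this becomes
\begin{equation*}
\mathcal{R}(\gamma'_j,\gamma'_k)=\pi\frac{r'}{r}\bigl(T^t(T_I^t)^{-1}A^t\bar{T}\bigr)_{jk},
\end{equation*}
so it suffices to compute the imaginary part of the matrix $M:=T^t(T_I^t)^{-1}A^t\bar{T}$.

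Next I would expand $T=T_R+\mathbf{i}T_I$ and $\bar{T}=T_R-\mathbf{i}T_I$ with $T_R:=\mathrm{Re}T$. Since $T^t(T_I^t)^{-1}=T_R^t(T_I^t)^{-1}+\mathbf{i}I_n$, a short multiplication gives
\begin{equation*}
M=\bigl(T_R^t(T_I^t)^{-1}A^t T_R+A^t T_I\bigr)+\mathbf{i}\bigl(A^t T_R-T_R^t(T_I^t)^{-1}A^t T_I\bigr),
\end{equation*}
and, since $A$, $T_R$, $T_I$ are real matrices, $\mathrm{Im}\,M=A^t T_R-T_R^t(T_I^t)^{-1}A^t T_I$.

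Finally I would invoke the holomorphicity of $\varphi^*E^{T'}_{(r,A,\mu,\mathcal{U})}$, i.e. $AT'=(AT')^t$, which as noted above is equivalent to $(T-\delta)^t A=A^t(T-\delta)$. Comparing imaginary parts (recall $\delta\in M(n;\mathbb{Z})$) yields $T_I^t A=A^t T_I$, and comparing real parts yields $A^t T_R-T_R^t A=A^t\delta-\delta^t A$. Using $A^t T_I=T_I^t A$ we get $T_R^t(T_I^t)^{-1}A^t T_I=T_R^t(T_I^t)^{-1}T_I^t A=T_R^t A$, hence
\begin{equation*}
\mathrm{Im}\,M=A^t T_R-T_R^t A=A^t\delta-\delta^t A.
\end{equation*}
Multiplying by $\pi\frac{r'}{r}$ gives $\mathrm{Im}\,\mathcal{R}(\gamma'_j,\gamma'_k)=\bigl(\pi\frac{r'}{r}(A^t\delta-\delta^t A)\bigr)_{jk}$, as claimed.

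The argument is purely linear-algebraic, so there is no real ``hard part''; the only points requiring care are keeping track of transposes (the paper does not assume $\mathrm{Im}T$ symmetric) and exploiting \emph{both} the real and the imaginary parts of the single relation $(T-\delta)^t A=A^t(T-\delta)$ — the imaginary part is precisely what collapses the factor $(T_I^t)^{-1}$, and the real part is precisely what identifies the remainder with $A^t\delta-\delta^t A$.
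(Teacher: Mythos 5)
Your proposal is correct and follows essentially the same route as the paper: compute $\mathcal{R}(\gamma'_j,\gamma'_k)=(4\pi^2 T^tR\bar{T})_{jk}$, expand with $T=T_R+\mathbf{i}T_I$ and $R=\frac{1}{4\pi}\frac{r'}{r}(T_I^{-1})^tA^t$, and use $AT'=(AT')^t$ (equivalently $(T-\delta)^tA=A^t(T-\delta)$) to reduce the imaginary part to $A^t\delta-\delta^tA$. You merely make explicit the two consequences (imaginary part giving $T_I^tA=A^tT_I$, real part giving $A^tT_R-T_R^tA=A^t\delta-\delta^tA$) that the paper invokes in its last two equalities without spelling them out.
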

\begin{proof}
By definition, $\mathcal{R}(\gamma '_j, \gamma '_k)=(4\pi ^2T^t R \bar{T})_{jk}$, so for $T=T_R+\mathbf{i}T_I$, it turns out to be 
\begin{align*}
4\pi ^2 T^t R \bar{T}&=4\pi ^2 (T_R^t +\mathbf{i}T_I^t)\cdot \frac{1}{4\pi }\frac{r'}{r}(T_I^{-1})^t A^t \cdot (T_R-\mathbf{i}T_I) \\
&=\pi \frac{r'}{r}\{ T_R^t (T_I^{-1})^t A^t T_R +A^t T_I +\mathbf{i}(A^t T_R -T_R^t (T_I^{-1})^t A^t T_I) \}.
\end{align*}
Thus, 
\begin{align*}
\mathrm{Im}\mathcal{R}(\gamma '_j,\gamma '_k)&=\left( \pi \frac{r'}{r}(A^t T_R -T_R^t (T_I^{-1})^t A^t T_I) \right)_{jk} \\
&=\left( \pi \frac{r'}{r}(A^t T_R -T_R^t A) \right)_{jk} \\
&=\left( \pi \frac{r'}{r}(A^t \delta -\delta ^t A) \right)_{jk}.
\end{align*}
Here, the second equality and the third equality follow from $AT'=(AT')^t$.
\end{proof}
\begin{proposition}
For $\gamma _1,\cdots, \gamma _n$ and $\gamma '_1,\cdots, \gamma '_n$, $\mathrm{Im}\mathcal{R}(\gamma _j,\gamma '_k)=-\pi \frac{r'}{r}a_{jk}$, $\mathrm{Im}\mathcal{R}(\gamma '_k,\gamma _j)=\pi \frac{r'}{r}a_{jk}$, where $j,k=1,\cdots,n$.
\end{proposition}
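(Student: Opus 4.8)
The plan is to compute $\mathcal{R}(\gamma_j, \gamma'_k)$ and $\mathcal{R}(\gamma'_k, \gamma_j)$ directly from the definition of the Hermitian form $\mathcal{R}$ and the explicit expression $R = \frac{1}{4\pi}\frac{r'}{r}(T_I^{-1})^t A^t$ for the curvature matrix established in the preceding lemma, and then extract the imaginary parts. This is the same style of argument used in the two preceding propositions, so the proof should be short.

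First I would recall that $\gamma_j = (0,\dots,2\pi,\dots,0)^t$ is $2\pi$ times the $j$-th standard basis vector $e_j$, while $\gamma'_k$ is $2\pi$ times the $k$-th column of $T$, i.e.\ $\gamma'_k = 2\pi T e_k$. Hence, since $\mathcal{R}(z,w) = z^t R \bar{w}$ with $R$ real, one has
\begin{equation*}
\mathcal{R}(\gamma_j, \gamma'_k) = 4\pi^2\, e_j^t R \,\overline{T e_k} = 4\pi^2 (R\bar{T})_{jk}, \qquad \mathcal{R}(\gamma'_k, \gamma_j) = 4\pi^2 (Te_k)^t R\, e_j = 4\pi^2 (T^t R)_{kj}.
\end{equation*}
Substituting $R = \frac{1}{4\pi}\frac{r'}{r}(T_I^{-1})^t A^t$ and writing $\bar{T} = T_R - \mathbf{i}T_I$, the first expression becomes $\pi\frac{r'}{r}\big((T_I^{-1})^t A^t T_R - \mathbf{i}(T_I^{-1})^t A^t T_I\big)_{jk}$; the imaginary part of this is $-\pi\frac{r'}{r}\big((T_I^{-1})^t A^t T_I\big)_{jk}$. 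The remaining point is that $(T_I^{-1})^t A^t T_I$ must be shown to equal $A^t$ so that this reduces to $-\pi\frac{r'}{r}(A^t)_{jk} = -\pi\frac{r'}{r}a_{kj}$; I would get this from the holomorphicity condition $AT' = (AT')^t$, which (as used in the previous proposition) gives $A^t T_R = T_R^t A$ and, combined with $A^t T_I$ being symmetric, forces the asserted identity — indeed $AT'=(AT')^t$ is equivalent to $(\mathrm{Im}T)^tA = ((\mathrm{Im}T)^tA)^t$ together with $\mathrm{Re}(\cdot)$, so $T_I A$ is symmetric, whence $(T_I^{-1})^t A^t T_I = (T_I A)^{-t}\cdot(T_I A)^t\cdot$ manipulations yield $A^t$. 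The analogous computation for $\mathcal{R}(\gamma'_k,\gamma_j) = 4\pi^2(T^t R)_{kj} = \pi\frac{r'}{r}(T^t (T_I^{-1})^t A^t)_{kj} = \pi\frac{r'}{r}((T_R^t + \mathbf{i}T_I^t)(T_I^{-1})^t A^t)_{kj}$ has imaginary part $\pi\frac{r'}{r}(T_I^t(T_I^{-1})^t A^t)_{kj} = \pi\frac{r'}{r}(A^t)_{kj} = \pi\frac{r'}{r}a_{jk}$, using $T_I^t(T_I^{-1})^t = I_n$ and the symmetry of $T_I$. Reconciling the index conventions (the statement writes $a_{jk}$ where my raw computation produces $a_{kj}$) via the holomorphicity-forced symmetry is the only subtle bookkeeping point; everything else is a direct substitution.

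The main obstacle, such as it is, is making the index/transpose bookkeeping consistent: one must be careful that $\mathcal{R}$ is conjugate-linear in the second slot, that $\gamma'_k$ corresponds to the $k$-th \emph{column} of $T$, and that the condition $AT'=(AT')^t$ is what converts the expression $(T_I^{-1})^tA^tT_I$ appearing in the first computation into a plain $A^t$ with the index placement matching the claim. I do not expect any genuine difficulty beyond this; the proof is a two-line calculation plus an invocation of the holomorphicity hypothesis, exactly parallel to the proof of the immediately preceding proposition.
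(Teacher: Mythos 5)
Your overall route is the same as the paper's: write $\mathcal{R}(\gamma_j,\gamma'_k)=4\pi^2(R\bar{T})_{jk}$ and $\mathcal{R}(\gamma'_k,\gamma_j)=4\pi^2(T^tR)_{kj}$, substitute $R=\frac{1}{4\pi}\frac{r'}{r}(T_I^{-1})^tA^t$, and invoke $AT'=(AT')^t$. Your treatment of the second quantity is correct (and in fact $T_I^t(T_I^{-1})^t=I_n$ needs no symmetry of $T_I$). The problem is the last step of the first computation: you assert $(T_I^{-1})^tA^tT_I=A^t$, which produces $-\pi\frac{r'}{r}a_{kj}$, and you then propose to repair the index mismatch with the stated $-\pi\frac{r'}{r}a_{jk}$ by a symmetry of $A$ ``forced by holomorphicity''. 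That repair is not available: $AT'=(AT')^t$ is equivalent to the symmetry of $(\mathrm{Im}T)^tA$ and of $A^t\,\mathrm{Re}(T-\delta)$, and neither forces $A=A^t$; for instance $T_I=\mathrm{diag}(1,2)$ and $A=\left(\begin{smallmatrix}0&2\\1&0\end{smallmatrix}\right)$ make $T_I^tA$ symmetric while $A$ is not. So as written the first half of your argument would prove a wrong index placement and then appeal to a false symmetry to fix it.

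The correct simplification — the one the paper states as $\mathrm{Im}(R\bar{T})=-\frac{1}{4\pi}\frac{r'}{r}A$ — is $(T_I^{-1})^tA^tT_I=A$, not $A^t$: from the symmetry of $(\mathrm{Im}T)^tA$ one has $T_I^tA=A^tT_I$, hence $(T_I^t)^{-1}A^tT_I=(T_I^t)^{-1}T_I^tA=A$, giving $\mathrm{Im}\mathcal{R}(\gamma_j,\gamma'_k)=-\pi\frac{r'}{r}(A)_{jk}=-\pi\frac{r'}{r}a_{jk}$ directly, with no reconciliation needed. With that one-line correction your proof coincides with the paper's.
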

\begin{proof}
First, we prove $\mathrm{Im}R\bar{T}=-\frac{1}{4\pi }\frac{r'}{r}A$. For $T=T_R+\mathbf{i}T_I$,
\begin{equation*}
R\bar{T}=\frac{1}{4\pi }\frac{r'}{r}(T_I^{-1})^t A^t T_R -\frac{\mathbf{i}}{4\pi }\frac{r'}{r}(T_I^{-1})^t A^t T_I,
\end{equation*}
so we see 
\begin{align*}
\mathrm{Im}R\bar{T}&=-\frac{1}{4\pi }\frac{r'}{r}(T_I^{-1})^t A^t T_I \\
&=-\frac{1}{4\pi }\frac{r'}{r}A.
\end{align*}
Here, we used $AT'=(AT')^t$. Similarly, we can also prove 
\begin{equation*}
\mathrm{Im}\bar{R}T=\frac{1}{4\pi }\frac{r'}{r}A.
\end{equation*}
On the other hand, the following relations hold.
\begin{equation*}
\mathcal{R}(\gamma _j,\gamma '_k)=(4\pi ^2 R\bar{T})_{jk},\ \mathcal{R}(\gamma '_k,\gamma _j)=(4\pi ^2 \bar{R}T)_{jk}.
\end{equation*}
Thus, by using $\mathrm{Im}R\bar{T}=-\frac{1}{4\pi }\frac{r'}{r}A$ and $\mathrm{Im}\bar{R}T=\frac{1}{4\pi }\frac{r'}{r}A$, we obtain
\begin{equation*}
\mathrm{Im}\mathcal{R}(\gamma _j,\gamma '_k)=-\pi \frac{r'}{r}a_{jk},\ \mathrm{Im}\mathcal{R}(\gamma '_k,\gamma _j)=\pi \frac{r'}{r}a_{jk}.
\end{equation*}
\end{proof}
Now, we consider a projectively flat bundle $\mathcal{E}_{(r,A,\mu ,\mathcal{U})}\rightarrow T^{2n}_{J=T}$ of rank $r'$ whose factor of automorphy $j : L\times \mathbb{C}^n \rightarrow GL(r';\mathbb{C})$ and connection $\tilde{\nabla}_{(r,A,\mu ,\mathcal{U})} =d+\tilde{\omega }_{(r,A,\mu ,\mathcal{U})} $ are expressed locally as follows.
\begin{equation*}
j(\gamma ,z)=U(\gamma )\mathrm{exp} \left\{ \frac{1}{r'}\mathcal{R}(z,\gamma )+\frac{1}{2r'} \mathcal{R}(\gamma ,\gamma ) \right\}, 
\end{equation*}
\begin{align*}
\tilde{\omega }_{(r,A,\mu ,\mathcal{U})} =-\frac{1}{r'}dz^t R \bar{z}\cdot I_{r'} &-\frac{\mathbf{i}}{2\pi r}\mu ^t (\delta -\bar{T})(T-\bar{T})^{-1}dz\cdot I_{r'} \\
&+\frac{\mathbf{i}}{2\pi r} \bar{\mu }^t (\delta -\bar{T})(T-\bar{T})^{-1} dz\cdot I_{r'}.
\end{align*}
Here, $U(\gamma _j)$, $U(\gamma '_k)\in U(r')$ $(j,k=1,\cdots,n)$ satisfy the relations
\begin{align}
&U(\gamma _j)U(\gamma _k)=U(\gamma _k)U(\gamma _j), \label{cocycle1} \\
&U(\gamma '_j)U(\gamma '_k)=\zeta ^{(A^t \delta )_{jk}-(A^t \delta )_{kj}}U(\gamma '_k)U(\gamma '_j), \label{cocycle2} \\
&\zeta ^{-a_{jk}}U(\gamma '_k)U(\gamma _j)=U(\gamma _j)U(\gamma '_k), \label{cocycle3}
\end{align}
and
\begin{equation*}
\mathcal{U}:= \Bigl\{ U(\gamma _j),\ U(\gamma '_k) \in U(r') \ | \ (\ref{cocycle1}),\ (\ref{cocycle2}),\ (\ref{cocycle3}),\ j,k=1,\cdots, n \Bigr\}.
\end{equation*}
In order to compare the definition of $\varphi ^*E_{(r,A,\mu ,\mathcal{U})}^{T'}$ with the definition of $\mathcal{E}_{(r,A,\mu ,\mathcal{U})}$, we recall the definition of transition functions of $\varphi ^*E_{(r,A,\mu ,\mathcal{U})}^{T'}$. Let 
\begin{equation*}
s(x_1,\cdots,x_n,y_1,\cdots,y_n) 
\end{equation*}
be a smooth section of $\varphi ^*E_{(r,A,\mu ,\mathcal{U})}^{T'}$. Then, the transition functions of $\varphi ^*E_{(r,A,\mu ,\mathcal{U})}^{T'}$ are given by
\begin{align*}
&s(x_1,\cdots, x_j+2\pi ,\cdots, x_n, y_1,\cdots, y_n)=V'_j \cdot s(x_1,\cdots, x_n, y_1,\cdots, y_n), \\
&s(x_1,\cdots, x_n, y_1,\cdots, y_k+2\pi , \cdots, y_n)=e^{-\frac{\mathbf{i}}{r}a_k (x+\delta y)}U'_k \cdot s(x_1,\cdots, x_n, y_1,\cdots, y_n), 
\end{align*}
where $V'_j$, $U'_k \in U(r')$ $(j,k=1,\cdots, n)$ and $a_k:=(a_{1k},\cdots, a_{nk})$. In particular, the cocycle condition is expressed as 
\begin{equation*}
V'_j V'_k =V'_k V'_j,\ U'_j U'_k =\zeta ^{(A^t \delta )_{jk}-(A^t \delta )_{kj}}U'_k U'_j,\ \zeta ^{-a_{jk}}U'_k V'_j =V'_j U'_k .
\end{equation*}
Clearly, the relations (\ref{cocycle1}), (\ref{cocycle2}) and (\ref{cocycle3}) are equivalent to the cocycle condition of $\varphi ^*E_{(r,A,\mu ,\mathcal{U})}^{T'}$ (in fact, one of the purposes of this section is to prove $\varphi ^*E_{(r,A,\mu ,\mathcal{U})}\cong \mathcal{E}_{(r,A,\mu ,\mathcal{U})}^{T'}$, and this result is given in Theorem \ref{maintheorem}). Furthermore, the curvature form $\tilde{\Omega } _{(r,A,\mu ,\mathcal{U})}$ of $\mathcal{E}_{(r,A,\mu ,\mathcal{U})}$ is expressed locally as
\begin{equation*}
\tilde{\Omega } _{(r,A,\mu ,\mathcal{U})}=\frac{1}{r'}dz^t R d\bar{z}\cdot I_{r'}.
\end{equation*}
Hence, we fix $r$, $A$, $\mu $ (note that $r'$ is uniquely defined by using $r$ and $A$), and by comparing the definition of $\varphi ^*E_{(r,A,\mu ,\mathcal{U})}^{T'}$ with the definition of $\mathcal{E}_{(r,A,\mu ,\mathcal{U})}$, we see that the cardinality of the set $\{(\varphi ^*E_{(r,A,\mu ,\mathcal{U})}^{T'},\varphi ^*\nabla_{(r,A,\mu ,\mathcal{U})}) \}$ is equal to the cardinality of the set $\{ (\mathcal{E}_{(r,A,\mu ,\mathcal{U})},\tilde{\nabla} _{(r,A,\mu ,\mathcal{U})}) \}$. Thus, we expect that there exists an isomorphism $\Psi : \varphi ^*E_{(r,A,\mu ,\mathcal{U})}^{T'} \stackrel{\sim }{\rightarrow } \mathcal{E}_{(r,A,\mu ,\mathcal{U})}$ which gives a correspondence between $\{ (\varphi ^*E_{(r,A,\mu ,\mathcal{U})}^{T'}, \varphi ^*\nabla_{(r,A,\mu ,\mathcal{U})}) \}$ and $\{ (\mathcal{E}_{(r,A,\mu ,\mathcal{U})},\tilde{\nabla} _{(r,A,\mu ,\mathcal{U})}) \}$. Actually, the following theorem holds.
\begin{theo} \label{maintheorem}
One has $\varphi ^*E_{(r,A,\mu ,\mathcal{U})}\cong \mathcal{E}_{(r,A,\mu ,\mathcal{U})}^{T'}$, where an isomorphism $\Psi : \varphi ^*E_{(r,A,\mu ,\mathcal{U})}^{T'} \stackrel{\sim }{\rightarrow } \mathcal{E}_{(r,A,\mu ,\mathcal{U})}$ is expressed locally as
\begin{align*}
\Psi (z,\bar{z})=\mathrm{exp} \biggl\{ & \frac{\mathbf{i}}{4\pi r'} z^t \bar{\mathcal{A}} z +\frac{\mathbf{i}}{4\pi r'} \bar{z}^t \mathcal{A} \bar{z} -\frac{\mathbf{i}}{2\pi r'} z^t \mathcal{A} \bar{z} \\
&+\frac{\mathbf{i}}{2\pi r} \bar{z}^t \{ (T-\bar{T})^{-1} \}^t (\delta -T)^t \mu -\frac{\mathbf{i}}{2\pi r} z^t \{ (T-\bar{T})^{-1} \}^t (\delta -\bar{T})^t \bar{\mu } \biggr\} \cdot I_{r'}, 
\end{align*}
$\mathcal{A}:=\frac{r'}{r}\{ (T-\bar{T})^{-1} \}^t A^t (\delta -T) (T-\bar{T})^{-1}$.
\end{theo}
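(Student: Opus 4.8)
The plan is to exhibit the explicit gauge transformation $\Psi$ given in the statement and verify two things: first, that $\Psi$ intertwines the transition functions of $\varphi^*E_{(r,A,\mu,\mathcal{U})}^{T'}$ with the factor of automorphy $j(\gamma,z)$ of $\mathcal{E}_{(r,A,\mu,\mathcal{U})}$ (so that $\Psi$ descends to a bundle isomorphism on $T^{2n}_{J=T}$), and second, that $\Psi$ pulls back the connection $\tilde{\nabla}_{(r,A,\mu,\mathcal{U})}$ to $\varphi^*\nabla_{(r,A,\mu,\mathcal{U})}$. Since the unitary pieces $V'_j, U'_k$ on the $\varphi^*E$-side and $U(\gamma_j), U(\gamma'_k)$ on the $\mathcal{E}$-side satisfy the same cocycle relations (\ref{cocycle1})--(\ref{cocycle3}), we may simply declare $U(\gamma_j):=V'_j$, $U(\gamma'_k):=U'_k$; the content of the theorem is entirely in matching the \emph{scalar} (exponential) factors, so $\Psi$ will be a scalar multiple of $I_{r'}$ and it suffices to work with the underlying line bundle, i.e.\ to treat $r'=1$ and chase exponents.

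First I would set up the dictionary between the two descriptions of sections. On the $\varphi^*E$-side the torus is $\mathbb{R}^{2n}/2\pi\mathbb{Z}^{2n}$ with real coordinates $(x,y)$, and the generators $\gamma_j$ (the $x$-shifts) and $\gamma'_k$ (the $y$-shifts) act by $s\mapsto V'_j s$ and $s\mapsto e^{-\frac{\mathbf{i}}{r}a_k(x+\delta y)}U'_k s$ respectively. On the $\mathcal{E}$-side the torus is $\mathbb{C}^n/L$ with $L=2\pi(\mathbb{Z}^n\oplus T\mathbb{Z}^n)$ and $z$, and a lattice vector $\gamma$ acts via the factor of automorphy $j(\gamma,z)=U(\gamma)\exp\{\frac{1}{r'}\mathcal{R}(z,\gamma)+\frac{1}{2r'}\mathcal{R}(\gamma,\gamma)\}$. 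The key substitution is $z=x+Ty$ (equivalently $\bar z = x+\bar T y$), so that a shift $x\mapsto x+2\pi e_j$ corresponds to $z\mapsto z+\gamma_j$ and $y\mapsto y+2\pi e_k$ corresponds to $z\mapsto z+\gamma'_k$. Then I would compute, for each of the two types of generators, the quantity $\Psi(z+\gamma,\overline{z+\gamma})\cdot(\text{old transition factor})\cdot\Psi(z,\bar z)^{-1}$ and check it equals $j(\gamma,z)$. This is a finite computation using the already-established formulas $\mathcal{R}(\gamma_j,\gamma_k)=4\pi^2 R_{jk}$, $\mathrm{Im}\,\mathcal{R}(\gamma'_j,\gamma'_k)=(\pi\frac{r'}{r}(A^t\delta-\delta^t A))_{jk}$, and $\mathcal{R}(\gamma_j,\gamma'_k)=(4\pi^2 R\bar T)_{jk}$, together with the identities $R\bar T$ and $\bar R T$ computed in the preceding propositions; the matrix $\mathcal{A}$ is engineered precisely so that the quadratic-in-$z$ terms produced by shifting $z$ in $z^t\bar{\mathcal{A}}z + \bar z^t\mathcal{A}\bar z - 2z^t\mathcal{A}\bar z$ reproduce the $\frac{1}{r'}\mathcal{R}(z,\gamma)+\frac{1}{2r'}\mathcal{R}(\gamma,\gamma)$ of the factor of automorphy, while the linear terms involving $\mu,\bar\mu$ absorb the $e^{-\frac{\mathbf{i}}{r}a_k(x+\delta y)}$ twist and the $\mu$-dependent part of $\tilde\omega$. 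I would also note that $\mathcal{R}$ being built from the symmetric matrix $R$ and $AT'=(AT')^t$ (equivalently $(T-\delta)^tA=A^t(T-\delta)$) are used repeatedly to collapse terms like $A^tT_R$ against $T_R^tA$.

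Second, I would verify the connection match by a direct local computation: $\Psi^{-1}d\Psi + \Psi^{-1}(\varphi^*\omega_{(r,A,\mu,\mathcal{U})})\Psi$ should equal $\tilde\omega_{(r,A,\mu,\mathcal{U})}$, which since everything is scalar reduces to $\varphi^*\omega_{(r,A,\mu,\mathcal{U})} + \Psi^{-1}d\Psi = \tilde\omega_{(r,A,\mu,\mathcal{U})}$. Here $\varphi^*\omega_{(r,A,\mu,\mathcal{U})} = -\frac{\mathbf{i}}{2\pi}(-\frac1r y^tA^t+\frac1r\mu^t)(dx+\delta dy)\cdot I_{r'}$ from section 4, I compute $d\log\Psi$ from the explicit exponent (a routine exterior derivative, using $dz = dx+Tdy$, $d\bar z = dx+\bar Tdy$), and I check the difference is $-\frac{1}{r'}dz^tR\bar z\cdot I_{r'}$ plus the two $\mu,\bar\mu$ terms of $\tilde\omega$; consistency of the curvatures ($\varphi^*\Omega = \tilde\Omega = \frac{1}{r'}dz^tRd\bar z\cdot I_{r'}$, already recorded) is a useful sanity check that no $(2,0)$ or $(0,2)$ error has crept in. The main obstacle is purely bookkeeping: keeping track of transposes, of which index runs over $\gamma$ versus $\gamma'$, and of the split of $\mathcal{R}$ into holomorphic/antiholomorphic pieces when $z$ is replaced by the real coordinates, since $\mathcal{R}(z,\gamma)$ is Hermitian-linear and the exponent of $\Psi$ mixes $z^t(\cdot)z$, $\bar z^t(\cdot)\bar z$ and $z^t(\cdot)\bar z$ terms. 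There is no conceptual difficulty beyond ensuring the ansatz for $\Psi$ is exactly the unique solution of the resulting system of coboundary equations — which is why the statement can afford to just write $\Psi$ down and why, as the authors note, this is an analogue of \cite[Theorem 3.6]{kazushi2} whose proof strategy can be followed line by line.
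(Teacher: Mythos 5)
Your overall strategy---writing down the explicit scalar gauge transformation $\Psi$ and checking that it intertwines both the connections and the transition data with the factor of automorphy---is the same as the paper's; the paper simply obtains $\Psi$ by solving the differential equation $\tilde{\nabla}_{(r,A,\mu,\mathcal{U})}\Psi=\Psi\,\varphi^*\nabla_{(r,A,\mu,\mathcal{U})}$ rather than verifying it afterwards, and the identity $\bar{\mathcal{A}}-\mathcal{A}=\frac{2\pi}{\mathbf{i}}R$ that your exponent-chase would need is exactly the key relation established there.

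However, one step as you state it would fail: you cannot ``simply declare $U(\gamma_j):=V'_j$, $U(\gamma'_k):=U'_k$.'' With the $\Psi$ of the statement, the conjugation does not reproduce the factor of automorphy on the nose with those unitaries. The computation of $\Psi(z+\gamma_j,\bar{z}+\gamma_j)\,V'_j\,\Psi^{-1}(z,\bar{z})$ leaves over the constant phase $\mathrm{exp}\bigl\{\frac{\mathbf{i}}{r}(\{(T-\bar{T})^{-1}\}^t(\delta-T)^t\mu)_j-\frac{\mathbf{i}}{r}(\{(T-\bar{T})^{-1}\}^t(\delta-\bar{T})^t\bar{\mu})_j\bigr\}$ in front of $V'_j$, and the $\gamma'_k$-shift leaves over a further phase involving $(T^t\bar{\mathcal{A}}(T-\bar{T}))_{kk}$, $((T-\bar{T})^t\mathcal{A}\bar{T})_{kk}$ and $\mu$, which is nontrivial even when $\mu=0$. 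These constant phases cannot be disposed of by adjusting $\Psi$: a multiplicative constant cancels in the conjugation, and a $z$-linear correction would ruin the connection matching you also require. They must instead be absorbed into the definition of $U(\gamma_j)$ and $U(\gamma'_k)$, which is precisely what the paper does ($U(\gamma_j)$ is the above phase times $V'_j$, and analogously for $U(\gamma'_k)$). Because the extra factors are scalars, the relations (\ref{cocycle1})--(\ref{cocycle3}) for the $U(\gamma)$'s remain equivalent to the cocycle condition for the $V'_j$, $U'_k$, so the theorem itself is unaffected; but with your literal identification the exponent matching would already break at the first type of generator, and the set $\mathcal{U}$ on the $\mathcal{E}$-side has to be understood via this phase-twisted correspondence rather than as the identity.
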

\begin{proof}
Note that $\mathcal{A}$ is a symmetric matrix because $AT'=(AT')^t$. We construct an isomorphism $\Psi : \varphi ^*E_{(r,A,\mu ,\mathcal{U})}^{T'} \stackrel{\sim }{\rightarrow } \mathcal{E}_{(r,A,\mu ,\mathcal{U})}$ explicitly such that its local expression is
\begin{equation*}
\Psi (z,\bar{z})=\psi (z,\bar{z})\cdot I_{r'},
\end{equation*}
where $\psi (z,\bar{z})$ is a function defined locally. By solving the differential equation
\begin{equation*}
\tilde{\nabla} _{(r,A,\mu ,\mathcal{U})}\Psi (z,\bar{z})=\Psi (z,\bar{z}) \varphi ^*\nabla_{(r,A,\mu ,\mathcal{U})},
\end{equation*}
we obtain the solution
\begin{align*}
\psi (z,\bar{z})=c\cdot \mathrm{exp} \biggl\{ & \frac{\mathbf{i}}{4\pi r'} z^t \bar{\mathcal{A}} z +\frac{\mathbf{i}}{4\pi r'} \bar{z}^t \mathcal{A} \bar{z} -\frac{\mathbf{i}}{2\pi r'} z^t \mathcal{A} \bar{z} \\
&+\frac{\mathbf{i}}{2\pi r} \bar{z}^t \{ (T-\bar{T})^{-1} \}^t (\delta -T)^t \mu -\frac{\mathbf{i}}{2\pi r} z^t \{ (T-\bar{T})^{-1} \}^t (\delta -\bar{T})^t \bar{\mu } \biggr\},
\end{align*}
where $c$ is an arbitrary constant, so by setting $c=1$, one has
\begin{align*}
\Psi (z,\bar{z})=\mathrm{exp} \biggl\{ & \frac{\mathbf{i}}{4\pi r'} z^t \bar{\mathcal{A}} z +\frac{\mathbf{i}}{4\pi r'} \bar{z}^t \mathcal{A} \bar{z} -\frac{\mathbf{i}}{2\pi r'} z^t \mathcal{A} \bar{z} \\
&+\frac{\mathbf{i}}{2\pi r} \bar{z}^t \{ (T-\bar{T})^{-1} \}^t (\delta -T)^t \mu -\frac{\mathbf{i}}{2\pi r} z^t \{ (T-\bar{T})^{-1} \}^t (\delta -\bar{T})^t \bar{\mu } \biggr\} \cdot I_{r'}.
\end{align*}
By using this $\Psi : \varphi ^*E_{(r,A,\mu ,\mathcal{U})}^{T'} \stackrel{\sim }{\rightarrow } \mathcal{E}_{(r,A,\mu ,\mathcal{U})}$, we transform the transition functions of $\varphi ^*E_{(r,A,\mu ,\mathcal{U})}^{T'}$. First, we prove the relation
\begin{equation}
\bar{\mathcal{A}}-\mathcal{A}=\frac{2\pi }{\mathbf{i}}R. \label{rel}
\end{equation}
We see
\begin{align}
\bar{\mathcal{A}}-\mathcal{A}&=\frac{r'}{r}\{ (T-\bar{T})^{-1} \}^t A^t (\delta -\bar{T}) (T-\bar{T})^{-1}-\{ (T-\bar{T})^{-1} \}^t A^t (\delta -T)(T-\bar{T})^{-1} \notag \\
&=\frac{r'}{r}\{ (T-\bar{T})^{-1} \}^t A^t \{ (\delta -\bar{T})-(\delta -T) \} (T-\bar{T})^{-1} \notag \\
&=\frac{r'}{r}\{ (T-\bar{T})^{-1} \}^t A^t (T-\bar{T}) (T-\bar{T})^{-1} \notag \\
&=\frac{r'}{r}\{ (T-\bar{T})^{-1} \}^t A^t. \label{rel1}
\end{align}
On the other hand, by definition,
\begin{equation*}
R=\frac{\mathbf{i}}{2\pi } \frac{r'}{r}\{ (T-\bar{T})^{-1} \}^t A^t, 
\end{equation*}
namely,
\begin{equation}
\frac{r'}{r}\{ (T-\bar{T})^{-1} \}^t A^t =\frac{2\pi }{\mathbf{i}}R. \label{rel2}
\end{equation}
Therefore, by comparing the relation (\ref{rel1}) with the relation (\ref{rel2}), one obtains the relation (\ref{rel}). Now, we calculate the formula
\begin{equation}
\Bigl( \Psi (z+\gamma _j,\bar{z}+\gamma _j) \Bigr) \Bigl( V'_j \Bigr) \Bigl( \Psi ^{-1} (z,\bar{z}) \Bigr), \label{x}
\end{equation}
where $j,k=1,\cdots,n$. By using the relation (\ref{rel}), the formula (\ref{x}) turns out to be
\begin{align*}
&\mathrm{exp} \biggl\{ \frac{\mathbf{i}}{r'} ((\bar{\mathcal{A}}-\mathcal{A})z)_j +\frac{\pi \mathbf{i}}{r'}(\bar{\mathcal{A}}-\mathcal{A})_{jj} +\frac{\mathbf{i}}{r} (\{ (T-\bar{T})^{-1} \}^t (\delta -T)^t \mu )_j \\
&-\frac{\mathbf{i}}{r} (\{ (T-\bar{T})^{-1} \}^t (\delta -\bar{T})^t \bar{\mu } )_j \biggr\} V'_j \\
&=\mathrm{exp} \biggl\{ \frac{2\pi }{r'}(Rz)_j +\frac{2\pi ^2}{r'} R_{jj}  +\frac{\mathbf{i}}{r} (\{ (T-\bar{T})^{-1} \}^t (\delta -T)^t \mu )_j \\
&-\frac{\mathbf{i}}{r} (\{ (T-\bar{T})^{-1} \}^t (\delta -\bar{T})^t \bar{\mu } )_j \biggr\} V'_j \\
&=\mathrm{exp} \biggl\{ \frac{\mathbf{i}}{r} (\{ (T-\bar{T})^{-1} \}^t (\delta -T)^t \mu )_j -\frac{\mathbf{i}}{r} (\{ (T-\bar{T})^{-1} \}^t (\delta -\bar{T})^t \bar{\mu } )_j \biggr\} V'_j \\
&\times \mathrm{exp} \biggl\{ \frac{1}{r'}\mathcal{R}(z,\gamma _j)+\frac{1}{2r'}\mathcal{R}(\gamma _j,\gamma _j) \biggr\}.
\end{align*}
In particular,
\begin{equation*}
\frac{\mathbf{i}}{r} (\{ (T-\bar{T})^{-1} \}^t (\delta -T)^t \mu )_j -\frac{\mathbf{i}}{r} (\{ (T-\bar{T})^{-1} \}^t (\delta -\bar{T})^t \bar{\mu } )_j
\end{equation*}
in this formula is a purely imaginary number, and this fact indicates
\begin{equation*}
\mathrm{exp} \biggl\{ \frac{\mathbf{i}}{r} (\{ (T-\bar{T})^{-1} \}^t (\delta -T)^t \mu )_j -\frac{\mathbf{i}}{r} (\{ (T-\bar{T})^{-1} \}^t (\delta -\bar{T})^t \bar{\mu } )_j \biggr\} V'_j \in U(r').
\end{equation*}
Moreover, for simplicity, we set
\begin{equation*}
U(\gamma_j):=\mathrm{exp} \biggl\{ \frac{\mathbf{i}}{r} (\{ (T-\bar{T})^{-1} \}^t (\delta -T)^t \mu )_j -\frac{\mathbf{i}}{r} (\{ (T-\bar{T})^{-1} \}^t (\delta -\bar{T})^t \bar{\mu } )_j \biggr\} V'_j.
\end{equation*}
Similarly, we also calculate the formula
\begin{equation}
\Bigl( \Psi (z+\gamma '_k,\bar{z}+\bar{\gamma '_k}) \Bigr) \Bigl( e^{-\frac{\mathbf{i}}{r}a_k (\delta -\bar{T})(T-\bar{T})^{-1} z +\frac{\mathbf{i}}{r}a_k (\delta -T)(T-\bar{T})^{-1} \bar{z} } \Bigr) \Bigl( \Psi ^{-1}(z,\bar{z}) \Bigr), \label{y}
\end{equation}
where $j,k=1,\cdots,n$. By a direct calculation, we see that the formula (\ref{y}) turns out to be
\begin{align*}
&\mathrm{exp} \biggl\{ \frac{2\pi }{r'} (\bar{T}^t R z)_k +\frac{\pi \mathbf{i}}{r'} (T^t \bar{\mathcal{A}} T)_{kk} +\frac{\pi \mathbf{i}}{r'} (\bar{T}^t \mathcal{A} \bar{T} )_{kk} -\frac{2\pi \mathbf{i}}{r'} (T^t \mathcal{A} \bar{T} )_{kk} \\
&+\frac{\mathbf{i}}{r}(\bar{T}^t \{ (T-\bar{T})^{-1} \}^t (\delta -T)^t \mu )_k -\frac{\mathbf{i}}{r} (T^t \{ (T-\bar{T})^{-1} \}^t (\delta -\bar{T})^t \bar{\mu })_k \biggr\} U'_k \\
&=\mathrm{exp} \biggl\{ \frac{2\pi }{r'} (\bar{T}^t R z)_k +\frac{\pi \mathbf{i}}{r'}(T^t (\bar{\mathcal{A}}-\mathcal{A}) \bar{T})_{kk} +\frac{\pi \mathbf{i}}{r'}(T^t \bar{\mathcal{A}} (T-\bar{T}))_{kk} -\frac{\pi \mathbf{i}}{r'}((T-\bar{T})^t \mathcal{A} \bar{T} )_{kk} \\
&+\frac{\mathbf{i}}{r}(\bar{T}^t \{ (T-\bar{T})^{-1} \}^t (\delta -T)^t \mu )_k -\frac{\mathbf{i}}{r} (T^t \{ (T-\bar{T})^{-1} \}^t (\delta -\bar{T})^t \bar{\mu })_k \biggr\} U'_k \\
&=\mathrm{exp} \biggl\{ \frac{2\pi }{r'} (\bar{T}^t R z)_k +\frac{2\pi ^2}{r'}(T^t R \bar{T})_{kk} +\frac{\pi \mathbf{i}}{r'}(T^t \bar{\mathcal{A}} (T-\bar{T}))_{kk} -\frac{\pi \mathbf{i}}{r'}((T-\bar{T})^t \mathcal{A} \bar{T} )_{kk} \\
&+\frac{\mathbf{i}}{r}(\bar{T}^t \{ (T-\bar{T})^{-1} \}^t (\delta -T)^t \mu )_k -\frac{\mathbf{i}}{r} (T^t \{ (T-\bar{T})^{-1} \}^t (\delta -\bar{T})^t \bar{\mu })_k \biggr\} U'_k \\
&=\mathrm{exp} \biggl\{ \frac{\pi \mathbf{i}}{r'}(T^t \bar{\mathcal{A}} (T-\bar{T}))_{kk} -\frac{\pi \mathbf{i}}{r'}((T-\bar{T})^t \mathcal{A} \bar{T} )_{kk} \\
&+\frac{\mathbf{i}}{r}(\bar{T}^t \{ (T-\bar{T})^{-1} \}^t (\delta -T)^t \mu )_k -\frac{\mathbf{i}}{r} (T^t \{ (T-\bar{T})^{-1} \}^t (\delta -\bar{T})^t \bar{\mu })_k \biggr\} U'_k \\
&\times \mathrm{exp} \biggl\{ \frac{1}{r'}\mathcal{R}(z,\gamma '_k)+\frac{1}{2r'} \mathcal{R}(\gamma '_k,\gamma '_k) \biggr\}.
\end{align*}
Note that the second equality follows from the relation (\ref{rel}). In particular,
\begin{align*}
&\frac{\pi \mathbf{i}}{r'}(T^t \bar{\mathcal{A}} (T-\bar{T}))_{kk} -\frac{\pi \mathbf{i}}{r'}((T-\bar{T})^t \mathcal{A} \bar{T} )_{kk} \\
&+\frac{\mathbf{i}}{r}(\bar{T}^t \{ (T-\bar{T})^{-1} \}^t (\delta -T)^t \mu )_k -\frac{\mathbf{i}}{r} (T^t \{ (T-\bar{T})^{-1} \}^t (\delta -\bar{T})^t \bar{\mu })_k
\end{align*}
is a purely imaginary number, and this fact indicates
\begin{align*}
&\mathrm{exp} \biggl\{ \frac{\pi \mathbf{i}}{r'}(T^t \bar{\mathcal{A}} (T-\bar{T}))_{kk} -\frac{\pi \mathbf{i}}{r'}((T-\bar{T})^t \mathcal{A} \bar{T} )_{kk} \\
&+\frac{\mathbf{i}}{r}(\bar{T}^t \{ (T-\bar{T})^{-1} \}^t (\delta -T)^t \mu )_k -\frac{\mathbf{i}}{r} (T^t \{ (T-\bar{T})^{-1} \}^t (\delta -\bar{T})^t \bar{\mu })_k \biggr\} U'_k \in U(r').
\end{align*}
Moreover, for simplicity, we set
\begin{align*}
U(\gamma'_k):=&\mathrm{exp} \biggl\{ \frac{\pi \mathbf{i}}{r'}(T^t \bar{\mathcal{A}} (T-\bar{T}))_{kk} -\frac{\pi \mathbf{i}}{r'}((T-\bar{T})^t \mathcal{A} \bar{T} )_{kk} \\
&+\frac{\mathbf{i}}{r}(\bar{T}^t \{ (T-\bar{T})^{-1} \}^t (\delta -T)^t \mu )_k -\frac{\mathbf{i}}{r} (T^t \{ (T-\bar{T})^{-1} \}^t (\delta -\bar{T})^t \bar{\mu })_k \biggr\} U'_k.
\end{align*}
Here, we remark that the matrices $U(\gamma_j)$, $U(\gamma'_k)$ $(j,k=1,\cdots, n)$ satisfy the relations (\ref{cocycle1}), (\ref{cocycle2}) and (\ref{cocycle3}) if and only if the matrices $V'_j$, $U'_k$ $(j,k=1,\cdots, n)$ satisfy the cocycle condition
\begin{equation*}
V'_j V'_k =V'_k V'_j,\ U'_j U'_k =\zeta ^{(A^t \delta )_{jk}-(A^t \delta )_{kj}}U'_k U'_j,\ \zeta ^{-a_{jk}}U'_k V'_j =V'_j U'_k 
\end{equation*}
of $\varphi^*E_{(r,A,\mu, \mathcal{U})}^{T'}$. This completes the proof.
\end{proof}
Finally, we check that the set consisting of projectively flat bundles $\varphi^*E_{(r,A,\mu, \mathcal{U})}^{T'}$ differs from the set consisting of holomorphic vector bundles $E_{(r,A,\mu,\mathcal{U})}^T$. Now, we define a set $\mathscr{E}_{\delta}$ by
\begin{equation*}
\mathscr{E}_{\delta}:=\Bigl\{ \varphi^*E_{(r,A,\mu, \mathcal{U})}^{T'} \ | \ AT'=(AT')^t \Leftrightarrow A^t (-T+\delta)=(A^t (-T+\delta))^t \Bigr\}.
\end{equation*}
Also, we define a set $\mathscr{E}_{\mathrm{SYZ}}$ \footnote{As stated in section 4, the definition of holomorphic vector bundles $E_{(r,A,\mu,\mathcal{U})}^{T'}$ is based on the SYZ construction (SYZ transform), and holomorphic vector bundles $E_{(r,A,\mu,\mathcal{U})}^T$ are defined as an analogue of those holomorphic vector bundles $E_{(r,A,\mu,\mathcal{U})}^{T'}$. Hence, we denoted the set $\left\{ E_{(r,A,\mu, \mathcal{U})}^T \ | \ AT=(AT)^t \right\}$ by $\mathscr{E}_{\mathrm{SYZ}}$ in the above.} by
\begin{equation*}
\mathscr{E}_{\mathrm{SYZ}}:=\left\{ E_{(r,A,\mu, \mathcal{U})}^T \ | \ AT=(AT)^t \right\}.
\end{equation*}
Then, in general, 
\begin{equation*}
\mathscr{E}_{\delta} \not= \mathscr{E}_{\mathrm{SYZ}} 
\end{equation*}
holds as sets. For example, we consider the case
\begin{equation*}
T=\left( \begin{array}{ccc} \mathbf{i} & 1 \\ -1 & \mathbf{i} \end{array} \right), \ \delta:=\left( \begin{array}{ccc} 0 & 0 \\ 0 & 1 \end{array} \right),
\end{equation*}
namely,
\begin{equation*}
T'=(-T+\delta)^{-1}=\left( \begin{array}{ccc} \mathbf{i}+1 & \mathbf{i} \\ -\mathbf{i} & 1 \end{array} \right),
\end{equation*}
and define
\begin{equation*}
A_1:=\left( \begin{array}{ccc} 0 & 1 \\ 1 & 1 \end{array} \right), \ A_2:=\left( \begin{array}{ccc} 1 & 1 \\ 1 & -1 \end{array} \right).
\end{equation*}
In this setting, although $A_1$ satisfies the relation $A_1T'=(A_1T')^t$, it does not satisfy the relation $A_1T=(A_1T)^t$, i.e., $A_1T \not=(A_1T)^t$. Similarly, although $A_2$ satisfies the relation $A_2T=(A_2T)^t$, it does not satisfy the relation $A_2T'=(A_2T')^t$, i.e., $A_2T' \not=(A_2T')^t$. This fact indicates
\begin{equation*}
\varphi^*E_{(1,A_1,0,\mathcal{U}_1)}^{T'} \not\in \mathscr{E}_{\mathrm{SYZ}}, \ E_{(1,A_2,0,\mathcal{U}_2)}^T \not\in \mathscr{E}_{\delta}.
\end{equation*}
Here, note that $\mathcal{U}_1$ and $\mathcal{U}_2$ are defined by using the data $(1,A_1)$ and $(1,A_2)$, respectively. Hence, we can conclude that
\begin{equation*}
\mathscr{E}_{\delta} \not= \mathscr{E}_{\mathrm{SYZ}}
\end{equation*}
holds in this case. Generally, when we discuss the homological mirror symmetry on a mirror pair obtained by using the SYZ construction, we employ holomorphic vector bundles which is obtained by using the SYZ transform in the complex geometry side. Thus, the above result $\mathscr{E}_{\delta} \not= \mathscr{E}_{\mathrm{SYZ}}$ implies that we can discuss the homological mirror symmetry, also by using a class of projectively flat bundles which is different from the class of such holomorphic vector bundles.

\section*{Acknowledgment}
I would like to thank Hiroshige Kajiura for various advices in writing this paper. I am also grateful to referees for useful comments. This work was supported by Grant-in-Aid for JSPS Research Fellow 18J10909.

\end{document}